\newcommand{\mathsym}[1]{{}}
\newcommand{\N}{\mathbb{N}}
\newcommand{\R}{\mathbb{R}}
\newcommand{\e}{\mathcal{E}}
\def\R{\mathbb{R}}
\def \N {\mathbb{N}}
\def \inc {\mbox{in\,}}
\def \Fix{{\rm Fix}}
\def\Co{\check{C}_{0}}
\def\F{{\check\pi_0}}
\newtheorem{lemma}{ \kern \parindent Lemma}[section]
\newtheorem{definition}{ \kern \parindent Definition}[section]
\newtheorem{proposition}{ \kern \parindent Proposition}[section]
\newtheorem{corollary}{ \kern \parindent Corollary}[section]
\newtheorem{theorem}{ \kern \parindent Theorem}[section]
\newtheorem{remark}{ \kern \parindent Remark}[section]
\def\Id{\mathrm{Id}}
\def\d{{d}}
\def\BG{{\rm BG}}
\def\e{\mathbf{c}}
\def\l{\mathbf{l}}
\def\r{\mathbf{r}}
\def\t{\mathbf{t}}
\def\d{\mathbf{d}}
\def\F{\check{\pi}_0}
\def\C{\mathbb{C}}
\begin{document}

\title[Regions of attraction, limits and end points\ldots]{Regions of attraction, limits and end points of an exterior discrete semi-flow}


\author[J.M. Garc\'{\i}a]{J.M. Garc\'{\i}a Calcines}
\address{Dto. de Matem\'{a}tica Fundamental \\
Universidad de La Laguna \\ 38271 LA LAGUNA.}
\email{jmgarcal@ull.es}

\author[L.J. Hern\'andez]{L.J. Hern\'andez Paricio}
\address{Dpto. de Matem\'{a}ticas y Computaci\'{o}n \\
Universidad de La Rioja \\
26004 LOGRO\~{N}O.} \email{luis\_javier.hernandez@unirioja.es}

\author[M. Mara\~n\'on]{M. Mara\~n\'on Grandes}
\address{Dpto. de Matem\'{a}ticas y Computaci\'{o}n \\
Universidad de La Rioja \\
26004 LOGRO\~{N}O.} \email{miguel.maranon@alum.unirioja.es}

\author[M.T. Rivas]{M.T. Rivas Rodr\'{\i}guez}
\address{Dpto. de Matem\'{a}ticas y Computaci\'{o}n \\ Universidad de La Rioja \\
26004 LOGRO\~{N}O.} \email{maria\_teresa.rivas@unirioja.es}

\subjclass[2010]{54H20, 37B99, 18B99, 18A40.}

\keywords{Discrete semi-flow, periodic point, omega limit,
positively Lagrange stable, region of attraction, exterior space, limit space,
end point, end space, exterior discrete semi-flow,  basin of an end point.}

\thanks{Partially supported by the University of La Rioja
PROFAI13/15
and an FPI grant from the Government of La Rioja.}

\maketitle

\begin{abstract}

An exterior space is a topological space provided with a
quasi-filter of open subsets (closed by finite intersections). In
this work, we analyze some relations between the notion of an
exterior space and the notion of a discrete semi-flow.

On the one hand, for an exterior space, one can consider limits,
bar-limits and different sets of end points (Steenrod, \v Cech,
Brown-Grossman). On the other hand, for a discrete semi-flow, one
can analyze fixed points, periodic points, omega-limits, et
cetera.

In this paper, we introduce a notion of exterior discrete semi-flow, which is a mix of exterior space and discrete semi-flow.
 We see that a discrete semi-flow can be provided with the structure of an exterior discrete semi-flow by taking
as structure of  exterior space  the family of right-absorbing open subsets,  which  can be used to study
the relation between limits and periodic points and connections
between bar-limits and omega-limits. The different notions of end points
are used to decompose the region of attraction of an
exterior discrete semi-flow as a disjoint union of basins of end
points.  We also analyze the exterior discrete semi-flow structure induced by the  family of  open neighborhoods of a given sub-semi-flow.

\end{abstract}

\section{Introduction}

The homotopy theory of exterior spaces is related to the theory of
proper homotopy, shape and strong shape theory and the homotopy
theory of pro-spaces and simplicial sets. These theories  are
interrelated and some homotopy invariant groups are usually
considered in order to distinguish spaces or to characterize some
homotopy equivalences. For instance, Steenrod, Borsuk-\v Cech and
Brown-Grossman homotopy groups can be given in all these contexts.
The theory of exterior spaces
 has proven to be very useful in the study of topological aspects of
several settings such as proper homotopy theory and its numerical
invariants or shape theory (\cite{DHR09,EHR89,EHR05,GGH98,GGH04}).

Recently, some applications of the category of exterior spaces to
the study of continuous dynamical systems (flows) have been
developed, see \cite{GHR12}. The notion of an absorbing open
subset of a dynamical system (i.e., an open subset that contains
the ``future part'' of all the trajectories) gives one of the keys
to connect the theory of dynamical systems and the theory of
exterior spaces. The family of all absorbing open subsets is a
quasi-filter which gives  the structure of an exterior space to
the flow. The  limit space and end space of  an exterior space are
used to construct the limit spaces and end spaces of a dynamical
system. Using end points (of Freudenthal type, see
\cite{Freudenthal}) of  a dynamical system, one has an induced
decomposition of a dynamical system as a disjoint union of basins
of end points.

In \cite{GHR14}, the theory of exterior spaces has also been used
to  construct a $\Co^{\r}$-completion and a $\Co^{\l}$-completion
of a dynamical system. For a given flow $X$, two
 maps $X\to \Co^{\r}(X)$ and $X\to \Co^{\l}(X)$ are constructed
and, when one of  these maps is a
homeomorphism, one has  the class of $\Co^{\r}$-complete and
$\Co^{\l}$-complete flows, respectively.

There are many relations between the topological properties of the
completions and the dynamical properties of a given flow. In the
case of a complete flow, this gives  interesting relations between
its topological properties (separability properties, compactness,
convergence of nets, etc.) and its dynamical properties (periodic
points, omega-limits, attractors, repulsors, etc.). These results
confirm  the importance of the purely topological behavior of a
continuous dynamical system in many, radically different in
principle, situations (differential equations, non-linear
analysis, transformation groups, et cetera).  Some topological
techniques in dynamical systems were initially introduced by
H. Poincar\'e \cite{Poincare2, Poincare} and  G. D. Birkhoff
\cite{Birkhoff}.

Taking into account the deep relations, given in \cite{GHR12,
GHR14}, between exterior spaces and continuous dynamical systems,
the main general objective of our present  study is to find out
the relationships between exterior spaces and the dynamical
properties of discrete dynamical systems. Discretization processes
and  the suspension  of the Poincar\'e first return map and other
constructions give a nice interdependence of the properties of
discrete and continuous dynamical systems. As a consequence, many
of the properties, results and applications given in \cite{GHR12,
GHR14} must have a counterpart of notions and results that  can be
developed and proved  for discrete dynamical systems.

It is worth mentioning some differences between continuous flows
and discrete semi-flows. In a semi-flow, we are dealing with a
semi-group of continuous maps instead of a group of
homeomorphisms. This implies that the construction and properties
of left omega-limits is quite different to those of right
omega-limits.  A continuous flow has the nice property that all
the points in a trajectory are in the same path component;
however, we can not ensure this nice property for discrete
semi-flows. These differences have to be taken into account when
one analyzes the interrelations between the theory of exterior
spaces and the theory of discrete semi-flows.  Subsequently, one
has some similitudes with the results and tools given in
\cite{GHR12}, but   new (non-analogous) techniques ought to be
developed for a better  analysis of discrete semi-flows. For
instance, it is interesting to remark that, for continuous
dynamical systems, an analogue of the Borsuk-\v Cech invariant
$\check \pi_0$ has played an important role in order to divide a
continuous flow into a disjoint union of  basins of end points
given by the functor   $\check \pi_0$. Nevertheless, for discrete
semi-flows it is better to change the analogue of functor $\check
\pi_0$ for the analogue of the Brown-Grossman invariant
$\pi_0^{BG}$. The use of Brown-Grossman end points is more natural
for discrete semi-flows and this technique was not introduced in
the previous applications of exterior spaces to continuous flows.

On the one hand, in this paper, for exterior discrete semi-flows, we have introduced the region of
attraction of an externology, the limit and bar-limit of an
externology and different notions of end points;  at least, one
can consider  three types of end points which are given by the
analogues of the $0$-dimensional homotopy invariants of types:
Borsuk-\v Cech $\check \pi_0$, Steenrod $\pi_0^S$ and Brown-Grossman groups $\pi_0^{BG}$.

On the other hand, in dynamic discrete systems one has the usual
notion of region of attraction of a right-invariant subset, the
omega-limit of a point, periodic points, basins of $n$-cycles, et
cetera.

The techniques presented in this paper give a very nice connection
between notions associated to an exterior space and dynamic
notions associated to a discrete semi-flow. The regions of
attraction of an externology are related to regions of attraction
of a right-invariant subset (Theorems \ref{regionattraction},
\ref{regionattraction1}), the notion of limit is related to the
subset of periodic points (Theorem \ref{periodic}), the bar-limit
is connected to the notion of omega-limit (Theorem \ref{main}),
the basin of an end point of Borsuk- \v Cech type is related with
the basin of a fixed point and the basin of an  end point of
Brown-Grossman type is related with the basin of a periodic point
and the basin of an $n$-cycle.

There are other interesting interrelations of exterior spaces and
discrete semi-flows that have not been analyzed in our first
approach to the study of discrete semi-flows via exterior spaces.
But the use of adequate externologies will also permit us to study
questions related to sensibility at the initial conditions,
stability problems and other dynamical questions connected to
higher dimension homotopy groups of exterior spaces.

\section{Preliminaries}

\subsection{ Exterior spaces}\label{exteriorspaces}

In this subsection we recall some definitions and properties
related to exterior spaces. For a deeper study of the category of
exterior spaces we refer the reader to \cite{G98}, \cite{GHR09}.

Given a topological space $X$ with a topology $\t_{X}$ and a
subset $A\subset X$, the closure of $A$ in $X$ is denoted by
$\overline{A}$ and the interior by $\mathring A$ or ${\rm
Int}(A)$.

\begin{definition}

Let $X$ be a topological space. An externology on $X$ is a
non-empty collection $\varepsilon(X) $ of open subsets which is
closed under finite intersections and such that, if $E\in
\varepsilon(X)$ and  $U$ is an open subset  such that $E\subset
U$, then $U\in \varepsilon(X)$. If an open subset is a member of
$\varepsilon(X) $, then it  is said to be an exterior open subset.

An {\bf exterior space}  $(X,\varepsilon(X))$ consists of a
topological  space $X$ together with an externology
$\varepsilon(X) $. When it is clear from the context,  we will
shorten the notation by just writing   $X$ instead of
$(X,\varepsilon(X))$.

A map $f:(X,\varepsilon(X))\rightarrow (X',\varepsilon'(X'))$ is
said to be an {\bf  exterior map} if it is continuous and
$f^{-1}(E')\in \varepsilon(X) $, for all $E'\in \varepsilon '(X).$

\end{definition}

For a topological space $X$ we can consider the co-compact
externology $\varepsilon^{\e}(X)=\{E\subset X \mid X\setminus E
\mbox{ is closed compact}\}$. We denote $\R$, $\R_+$ and $\N$ the
exterior spaces determined by the usual topology and co-compact
externology in the sets of real numbers $\R$, non-negative real
numbers $\R_+$ and natural numbers $\N=\{0,1,2,\dots\}$,
respectively.

The category of exterior spaces and exterior maps is denoted by {\bf E},
and the category of topological spaces and continuous maps by {\bf Top}.

We can consider the functor
$$(\cdot)\bar \times (\cdot) \colon {\bf E}\times {\bf Top}\rightarrow {\bf E}$$
given by the following construction.
Let $(X, \varepsilon(X))$ be an exterior space and $Y$ a topological space.
 We take on
$X\times Y$ the product topology  and the externology
$\varepsilon({X\bar \times Y})$ given by those open subsets
$E\subset X\times Y$ such that, for each $y\in Y$, there exists an open neighborhood
 $U_y$ and $T^{y}\in \varepsilon (X)$ such that
$T^{y}\times U_{y}\subset E.$ In order to avoid a possible
confusion with the product externology, this exterior space will
be denoted by $X\bar{\times }Y.$

Given $f,g\colon X \to Y$ in {\bf E},  it is said that $f$ is
exterior homotopic to $g$ if there is an exterior homotopy  $H
\colon X\bar \times I \to Y$ from  $f$ to $g$. Denote by $\pi {\bf
E}$ and $ \pi{\bf Top}$  the exterior homotopy category and the
usual homotopy category corresponding  to  ${\bf E}$ and $ {\bf
Top}$, respectively.  Given $X, Y$ two exterior spaces, the set of
exterior homotopy classes from $X$ to $Y$ will be denoted by $\pi
{\bf E}(X,Y)$. Its elements  $[f]$ are homotopy equivalence
classes of exterior maps $f\colon X \to Y$. Similar notation is
used in the case  of topological spaces.

\medskip

We recall that, for a topological space $Y$, $\pi_0(Y)$ denotes
the set of path components of $Y$ and we have a quotient map $Y\to
\pi_0(Y)$. Note that a continuous map $f\colon Y \to Y'$ induces a
natural map $\pi_0(f) \colon \pi_0(Y) \to \pi_0(Y')$.

An inverse system of sets (or topological spaces) is a functor
$Z\colon I \to {\bf Sets}$, where $I$ is a directed set and ${\bf
Sets}$ is the category of sets (resp., considering  {\bf Top}).
The functor $Z$ carries $i\geq j$, $i,j \in I$, to $Z_j^i \colon
Z_i \to Z_j$. The inverse limit of $Z=\{Z_i\}$ is denoted by $\lim
_{i\in I}Z_i$ (or just by $\lim Z_i$). An element of the inverse
limit can be represented by an element $(z_i)_{i \in I}$ of the
product $\prod_{i \in I}Z_i$ satisfying that $Z_j^i(z_i)=z_j$,
$i\geq j$.

 We also have that if $\{Y_i\}$ is an inverse
system of topological spaces, then $\{\pi_0(Y_i)\}$ is an inverse
system of sets and one can consider the inverse limit $\lim_{i\in
I} \pi_0(Y_i)$. An element of $\lim_{i\in I} \pi_0(Y_i)$ is given
by $(C_i)_{i \in I}$, where $C_i$ is a path component of $Y_i$
such that  $Y_j^i(C_i ) \subset C_j$ for $i \geq j$. For more
results and properties about inverse systems, we refer the reader
to \cite{E-H}.

\begin{definition} Given an exterior space $X$, its externology
$\varepsilon(X)$ can be considered as an inverse system of
topological spaces, and we have the following notions:

The topological subspace
$$L(X)=\lim_{E\in\varepsilon(X)}E=\bigcap_{E\in\varepsilon(X)}E$$
will be called the {\bf limit space} of $X$ and
$$\bar L(X)=\lim_{E\in\varepsilon(X)}\overline E=\bigcap_{E\in\varepsilon(X)}\overline E$$
will be called the {\bf bar-limit space} of $X$.

The {\bf $\check \pi_0$-end set} of $X$ is given by
$$\check \pi_0(X)=\lim_{E\in\varepsilon(X)}\pi_{0}(E).$$

The {\bf  ${\check {\bar \pi}}_0$-end set} of $X$ is given by
$$ {\check {\bar \pi}}_0(X)=\lim_{E\in\varepsilon(X)}\pi_0(\overline E).$$

The {\bf $\pi_0^{\BG}$-end set} of $X$ is given by
$$\pi_0^{\BG}(X)=\pi {\bf E}(\N,X).$$

The {\bf $\pi_0^{\rm S}$-end set} of $X$ is given by
$$\pi_0^{\rm S}(X)=\pi {\bf E}(\R_+,X).$$

\end{definition}

\subsection{Discrete semi-flows}

Next, we recall some basic notions about discrete semi-flows.
These notions can be given for a set or for a topological space.

\begin{definition}
A {\bf discrete semi-flow} on a (topological  space) set  $X$ is a
(continuous) map $\phi \colon \N {\times} X\to  X$ such that:

\begin{enumerate}
\item[(i)] $\phi(0,x)=x$, $\forall x \in X$;
\item[(ii)] $\phi(n,\phi(m, x))=\phi( n+m, x)$, $\forall x \in X$, $\forall n,m \in \N$.
\end{enumerate}
Note that giving a discrete semi-flow $(X,\phi)$ on a (topological
space) set $X$ is equivalent to giving a (continuous) map
$f=\phi^1\colon X \to X$.

A discrete semi-flow on $X$ will be denoted by $(X, \phi)$ and,
when no confusion is possible, we will use $X$ and  $n\cdot
x=\phi(n,x)$  for short. Similarly, for a subset $S\subset \N$ we
will denote $S\cdot x =\{n \cdot x \mid n \in S\}$.

Given two discrete semi-flows  $(X, \phi)$ and $(Y,\psi)$, a {\bf
discrete semi-flow morphism} $f\colon (X, \phi) \to (Y, \psi)$ is
a (continuous) map $f\colon X \to Y$ such that $f(n\cdot x)=n\cdot
f(x)$, for every $(n,x)\in \N\times X$. The category of discrete
semi-flows (defined on topological spaces) will be denoted by ${\bf F(\N})$.
\end{definition}

Given a discrete semi-flow  $\phi \colon \N {\times} X\to  X$,
$n_0\in \N$, $x_0\in X$, we have the induced maps
$\phi^{n_0}\colon X\to X$, $\phi^{n_0}(x)=\phi(n_0,x)$ and
$\phi_{x_0}\colon \N\to X$, $\phi_{x_0}(n)=\phi(n,x_0)$.

For  a discrete semi-flow  $(X, \phi)$, a subset $A\subset X$ is
said to be {\bf right-invariant} if $\phi^1(A)\subset A$ and it is
said to be {\bf left-invariant} if $(\phi^1)^{-1}(A)\subset A$. A
subset which is left-invariant and right-invariant it is said {\bf
completely invariant}.

Given two points $x,y \in X$, we have the following equivalence
relation: $x\sim y$ if there are $k,l \in \N$ such that
$\phi^k(x)=\phi^l(y)$. If $[x]$ is the equivalence class of $x$,
note that $[x]$ is a completely invariant subset. Denote by
$X/\hspace{-3pt}\sim$ the quotient set which has a trivial induced
action. The subset $[x]$ of $X$ is the {\bf big orbit} of $x$ and
the subset $\N \cdot x$ is the {\bf trajectory} of $x$.

\begin{definition} Let $X$ be a discrete semi-flow and $x$ a point of $X$.
\begin{itemize}
\item[(i)] $x$ is  a {\bf fixed} point  if, for every $n\in \N$,  $n\cdot x =x$.
\item[(ii)] $x$ is a {\bf periodic or cyclic} point if there is $n\in \N$, $n\not = 0$, such that $n\cdot x= x$.
\item[(iii)] $x$ is a {\bf $m$-periodic  point} if $m\cdot x= x$.
\item[(iv)] $x$ is a {\bf $m$-cyclic point} if $m\cdot x= x$ and if  for $0<k<m$, then $k\cdot x\not = x$.
\end{itemize}
 \end{definition}

The right-invariant  subsets of fixed,  periodic, $m$-periodic and
$m$-cyclic points of $X$ are denoted by $\Fix(X)$, $P(X)$,
$P_m(X)$ and $C_m(X)$, respectively. From the definition, it is
clear that $C_m(X)\subset P_m(X)$.

\medskip

A net  of a topological  space $X$ is denoted by $x_i$, where  we
suppose that $i$ describes a directed set. In this paper, $[n,
+\infty)$ denotes the subset $\{m\in \N\mid m\geq n\}$ and
similarly $(n, +\infty)=\{m\in \N\mid m>n\}$.
 The following notions are given for topological spaces with a given semi-flow structure:

\begin{definition}\label{rightlimit} For a discrete semi-flow  $(X,\phi)$,
the {\bf omega-limit set of a point} $x \in X$ (or right-limit
set, or positive limit set) is given as follows:
$$\Lambda (x) =\{y \in X \mid \exists  n_i \to +\infty, \, n_i \in \N, \, \mbox{such that }
 n_i\cdot x \to y\}.$$

We note that the subset $\Lambda(x)$ admits the alternative
definition
$$\Lambda (x) = \bigcap_{n \geq 0}\overline{ [n, +\infty) \cdot x}.$$
For a given subspace $S \subset  X$, the set $\Lambda (S)
=\bigcup_{x\in S} \Lambda(x)$ is called the  {\bf omega-limit set}
of $S.$

A point $x \in X$ is said to be {\bf Poisson stable} (or
positively Poisson stable) if $x\in \Lambda (x).$ We will denote
by ${\rm Poisson}(X)$ the right-invariant subset of Poisson stable
points of $X$.
\end{definition}

Note that $P(X) \subset {\rm Poisson}(X)  \subset \Lambda (X)$. It is also easy to check that  $\Lambda(x)$ and  $\Lambda(X)$ are
right-invariant  subsets of $X$ and one also has that, for $x\in
X$, $x$ and $\phi^1(x)$ have the same omega-limit.

\begin{lemma} Let $(X, \phi)$ be a discrete semi-flow and let $S\subset X$. Then,
\begin{itemize}
\item[(i)]$ \phi^1(\Lambda (x))\subset \Lambda (x)$,
\item[(ii)]$ \phi^1(\Lambda (S))\subset \Lambda (S)$,
\item[(iii)]$\Lambda (\phi^1(x)) = \Lambda (x)$.
\end{itemize}
\end{lemma}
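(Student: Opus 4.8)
The plan is to prove the three inclusions/equalities essentially from the two descriptions of $\Lambda$ given in Definition \ref{rightlimit}, namely the net description and the formula $\Lambda(x)=\bigcap_{n\geq 0}\overline{[n,+\infty)\cdot x}$, together with the already-recorded fact that $x$ and $\phi^1(x)$ have the same omega-limit. Since the lemma has no author proof appearing after it in the excerpt, I will give a short self-contained argument for each item.

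First, for (iii) I would argue that $\Lambda(\phi^1(x))=\Lambda(x)$. The sentence immediately before the lemma already asserts this, but to make the proof explicit I would use the net description: if $n_i\to+\infty$ and $n_i\cdot(\phi^1(x))\to y$, then by the semi-flow identity $n_i\cdot(\phi^1(x))=\phi(n_i,\phi(1,x))=\phi(n_i+1,x)=(n_i+1)\cdot x$, and since $n_i+1\to+\infty$ as well, $y\in\Lambda(x)$; conversely, if $n_i\cdot x\to y$ with $n_i\to+\infty$, then without loss of generality $n_i\geq 1$ for all $i$ (discard finitely many indices, which does not change convergence of a net), so $n_i\cdot x=\phi(n_i,x)=\phi(1,\phi(n_i-1,x))=\phi^1((n_i-1)\cdot x)$ with $n_i-1\to+\infty$, hence $y\in\phi^1(\Lambda(x))\subset\Lambda(\phi^1(x))$ once (i) is known. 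To avoid the forward reference it is cleaner to run the converse directly: $(n_i-1)\cdot x$ is a net with indices tending to $+\infty$ and $(n_i-1)\cdot(\phi^1(x))=n_i\cdot x\to y$, so $y\in\Lambda(\phi^1(x))$.

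Next, (i) follows from (iii) together with the general fact that $\phi^1(\Lambda(x))\subset\Lambda(\phi^1(x))$: indeed $\phi^1$ is continuous, so if $y=\lim n_i\cdot x$ then $\phi^1(y)=\lim\phi^1(n_i\cdot x)=\lim(n_i+1)\cdot x\in\Lambda(x)$, where the last membership uses $n_i+1\to+\infty$. Thus $\phi^1(\Lambda(x))\subset\Lambda(x)$ directly, and this already gives (i) without even invoking (iii). Alternatively one can read this off the intersection formula: $\phi^1$ is continuous, $\phi^1(\overline{[n,+\infty)\cdot x})\subset\overline{\phi^1([n,+\infty)\cdot x})=\overline{[n+1,+\infty)\cdot x}\subset\overline{[n,+\infty)\cdot x}$, so $\phi^1$ maps each set in the intersection into itself and hence maps the intersection into itself.

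Finally, (ii) is immediate from (i) by taking unions: $\Lambda(S)=\bigcup_{x\in S}\Lambda(x)$, so $\phi^1(\Lambda(S))=\bigcup_{x\in S}\phi^1(\Lambda(x))\subset\bigcup_{x\in S}\Lambda(x)=\Lambda(S)$. I do not anticipate a genuine obstacle here; the only point requiring a little care is the ``discard finitely many indices'' manoeuvre in the converse direction of (iii) — one must phrase it correctly for nets rather than sequences, i.e. pass to the cofinal subnet on which $n_i\geq 1$, which is legitimate because $n_i\to+\infty$ means $n_i\geq 1$ eventually and a tail of a net converges to the same limit. Everything else is a direct application of continuity of $\phi^1$ and the semi-flow identity (ii) of the definition of discrete semi-flow.
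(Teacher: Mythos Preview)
Your argument is correct. The paper does not supply a proof for this lemma at all: it is stated immediately after the sentence ``It is also easy to check that $\Lambda(x)$ and $\Lambda(X)$ are right-invariant subsets of $X$ and one also has that, for $x\in X$, $x$ and $\phi^1(x)$ have the same omega-limit'', and is then left unproved. Your write-up therefore fills in exactly the routine verification the authors had in mind, using continuity of $\phi^1$ and the semi-flow identity $\phi(n,\phi(1,x))=\phi(n+1,x)$.

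Two small remarks on presentation. First, your initial attempt at the converse inclusion in (iii) --- concluding $y\in\phi^1(\Lambda(x))$ from $y=\lim \phi^1((n_i-1)\cdot x)$ --- does not actually go through (the image of a limit need not lie in the image set without knowing the preimage net converges), but you immediately abandon it for the clean direct argument $(n_i-1)\cdot\phi^1(x)=n_i\cdot x\to y$, which is the right one. Second, your handling of the tail-of-a-net issue is fine: since $n_i\to+\infty$ the condition $n_i\ge 1$ holds on a cofinal tail, and restricting to that tail preserves the limit. With those points noted, each of (i), (ii), (iii) is established, and the intersection-formula variant you give for (i) is a perfectly good alternative.
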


\begin{remark}
Observe that, if $X$ satisfies the first axiom of countability
(for instance, when $X$ is metrizable), then we can consider
sequences instead of nets in definition \ref{rightlimit}.
\end{remark}

\begin{definition}  \label{LagrangeStable} Let $(X,\phi)$ be a discrete semi-flow and $x\in X$.
It is said that $X$ is {\bf Lagrange stable at $x$} (or positively
Lagrange stable at $x$) if $\overline{\N \cdot x}$ is a compact
subset. $X$ is {\bf  Lagrange stable} if, for every $x\in X$, $X$
is  Lagrange stable at $x$.
\end{definition}

\begin{definition}\label{region}Let $(X, \phi)$ be a discrete semi-flow and let
$S\subset X$.

The \textbf{region of pseudo-attraction of $S$} is defined by
$$\text{PA}(S)=\{x \in X \mid \Lambda(x)\subset S\}.$$

The \textbf{region of weak-attraction of $S$} is defined by
$$\text{WA}(S)=\{x \in X \mid \Lambda(x)\cap S \not= \emptyset\}.$$

The \textbf{region of attraction of $S$} is defined by
$$\text{A}(S)=\{x \in X \mid \emptyset \not =\Lambda(x)\subset S\}.$$
\end{definition}


\begin{lemma}\label{attraction}Let $(X, \phi)$ be a discrete semi-flow and let $S\subset X$.  Then,
\begin{itemize}
\item[(i)]$\text{PA}(S)=\text{PA}(\emptyset) \sqcup \text{A}(S)$;
\item[(ii)]$\text{A}(S)=\text{PA}(S) \cap \text{WA}(S)$;
\item[(iii)]$\text{PA}(S),\text{WA}(S),\text{A}(S)$ are completely invariant;
\item[(iv)] If $X$ is Lagrange stable at every point of $\text{PA}(S)$, then
$$\text{PA}(\emptyset)=\emptyset \hspace{10pt}\mbox{and}\hspace{10pt}\text{PA}(S)=
\text{A}(S).$$
\end{itemize}
\end{lemma}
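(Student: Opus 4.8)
First I would unpack the definitions so that all four claims become set-theoretic statements about $\Lambda$. Recall
$$
\text{PA}(S)=\{x\mid \Lambda(x)\subset S\},\quad
\text{WA}(S)=\{x\mid \Lambda(x)\cap S\neq\emptyset\},\quad
\text{A}(S)=\{x\mid \emptyset\neq\Lambda(x)\subset S\}.
$$
For (i), the plan is simply to observe that for each $x$ either $\Lambda(x)=\emptyset$ or $\Lambda(x)\neq\emptyset$, and in the first case $\Lambda(x)\subset S$ holds vacuously while $\Lambda(x)\subset\emptyset$; hence $\text{PA}(S)=\{x\mid\Lambda(x)=\emptyset\}\cup\{x\mid\emptyset\neq\Lambda(x)\subset S\}=\text{PA}(\emptyset)\sqcup\text{A}(S)$, and the union is disjoint because the two conditions $\Lambda(x)=\emptyset$ and $\Lambda(x)\neq\emptyset$ are mutually exclusive. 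For (ii), $x\in\text{PA}(S)\cap\text{WA}(S)$ means $\Lambda(x)\subset S$ and $\Lambda(x)\cap S\neq\emptyset$; the latter forces $\Lambda(x)\neq\emptyset$, so together these say exactly $\emptyset\neq\Lambda(x)\subset S$, i.e. $x\in\text{A}(S)$, and conversely.

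For (iii), I would use the alternative description $\Lambda(x)=\bigcap_{n\geq 0}\overline{[n,+\infty)\cdot x}$ together with the already-established fact (the Lemma preceding Definition \ref{rightlimit}) that $\Lambda(\phi^1(x))=\Lambda(x)$. Complete invariance of $\text{PA}(S)$, $\text{WA}(S)$, $\text{A}(S)$ then reduces to: $\phi^1(x)\in T \iff x\in T$ for $T$ any of these three sets, and this is immediate since membership in each depends on $x$ only through $\Lambda(x)$, and $\Lambda(x)=\Lambda(\phi^1(x))$. Right-invariance gives $\phi^1(x)\in T$ whenever $x\in T$; left-invariance gives the converse implication $x\in T$ whenever $\phi^1(x)\in T$; both follow from the equality, so all three sets are completely invariant.

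For (iv), assume $X$ is Lagrange stable at every point of $\text{PA}(S)$. The key step is the standard fact that if $\overline{\N\cdot x}$ is compact then $\Lambda(x)\neq\emptyset$: indeed $\{\overline{[n,+\infty)\cdot x}\}_{n\geq 0}$ is a decreasing family of nonempty closed subsets of the compact set $\overline{\N\cdot x}$, hence has the finite intersection property, so its total intersection $\Lambda(x)$ is nonempty. Applying this to any $x\in\text{PA}(S)$ (where Lagrange stability is assumed) shows $\Lambda(x)\neq\emptyset$, so no such $x$ can satisfy $\Lambda(x)=\emptyset$; thus $\text{PA}(\emptyset)\cap\text{PA}(S)=\emptyset$, and since trivially $\text{PA}(\emptyset)\subset\text{PA}(S)$, we get $\text{PA}(\emptyset)=\emptyset$. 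Feeding this into (i) yields $\text{PA}(S)=\emptyset\sqcup\text{A}(S)=\text{A}(S)$. The only mildly delicate point is the compactness argument in (iv) — one must be careful that the sets $\overline{[n,+\infty)\cdot x}$ are nonempty (clear, as $[n,+\infty)\cdot x\neq\emptyset$) and genuinely nested, and that finite intersection property plus compactness gives nonempty total intersection; everything else is a direct rewriting of definitions.
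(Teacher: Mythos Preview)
Your proposal is correct and follows essentially the same approach as the paper: parts (i) and (ii) are unwound directly from the definitions, part (iii) is deduced from the identity $\Lambda(x)=\Lambda(\phi^1(x))$, and part (iv) uses Lagrange stability to guarantee $\Lambda(x)\neq\emptyset$ via a compactness argument. Your write-up is simply more detailed than the paper's terse proof; the only slip is a harmless misreference (the lemma giving $\Lambda(\phi^1(x))=\Lambda(x)$ appears \emph{after} Definition~\ref{rightlimit}, not before it).
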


\begin{proof} (i) and (ii) follow from the definition and (iii) from the identity
$\Lambda(x)= \Lambda(\phi^1(x))$, for every $x \in X$. (iv)
follows from the Lagrange stability of $X$ and  the fact  that
$\Lambda(x)$ is a non-empty compact subset of $X$,  for any $x\in
X$.
\end{proof}

\section{Natural transformations for limit and end spaces of exterior spaces}\label{limitsandnt}

In subsection \ref{exteriorspaces}, for an exterior space, the
notions of limit, bar-limit and different sets of end points were
introduced. Now, we analyze some relationships between them.

Given an exterior space $(X, \varepsilon(X))$, the canonical maps
$$E\subset \overline{E},\quad E\to \pi_{0}(E), \quad \overline{E}\to \pi_{0}(\overline{E})$$
induce a commutative diagram

$$ \xymatrix{L(X) \ar[r]^{e} \ar[d] & {\check \pi}_{0}(X) \ar[d]\\
{\bar L}(X)\ar[r]^{\bar e}& {\check {\bar \pi}}_{0}(X)}$$

We can consider the shift map $s\colon \N \to \N$, given by
$s(i)=i+1$, $i\in \N$. This exterior map induces the canonical map
$S \colon \pi_{0}^{\BG}(X) \to  \pi_{0}^{\BG}(X)$,
$S([\alpha])=[\alpha s]$, for a given exterior map $\alpha \colon
\N \to X$.

The inclusion map $\inc\colon\N\to\R_+$ induces a natural
transformation $$R_X \colon \pi_0^{\rm S}(X) \to \pi_0^{\BG}(X),$$
given by $R_X([\alpha])=[\alpha|_{\N}]$ for an exterior map
$\alpha \colon \R_+ \to X$. Then, we can consider the diagram
$$\xymatrix{\pi_0^{\rm S}(X)\ar[rr]^{R_X}  & & \pi_0^{\BG}(X) \ar@<1ex>[r]^{\Id} \ar@<-1ex>[r]_{S} & \pi_0^{\BG}(X),
}$$ where the image of $R_X$ is the equalizer of the identity and
shift maps (see \cite{GHR09}).

\begin{remark} In fact, there is a very interesting exact sequence
of homotopy groups associated to an exterior space $X$ with a base
ray $\alpha \colon \R_+ \to X$
$$\xymatrix{\cdots \ar[r] &\pi_{q}^{\rm S}(X)\ar[r]   &\pi_{q}^{\BG}(X) \ar[r]^{\Id-S}  & \pi_{q}^{\BG}(X) \ar[r]  & \pi_{q-1}^{\rm S}(X) \ar[r]
& \cdots}$$ ending at dimension zero in the above diagram. These
higher exterior homotopy invariants are  powerful tools for the
study and classification of exterior spaces, see \cite{E-H},
\cite{EHR05}, \cite{GHR09}. In the next sections, we will only
consider the zero dimensional part of this sequence for the study
of end points (and their basins) of an exterior discrete
semi-flow.
\end{remark}

The connection between the set of $\pi_0^{\rm S}$-end points and
the set of $\check \pi_0$-end points of an exterior space is
given by the natural transformation
$$\eta_X\colon \pi_0^{\rm S}(X) \to \F(X),$$
where $\eta_X$ is defined by $\eta_X([\alpha])=\check
\pi_0(\alpha)(+\infty)$ (observe that $\check \pi_{0}(\R_{+} )
=\{+\infty\}$).

\medskip

There is also a relationship between  the sets of $\check
\pi_0$-end points and $\pi_0^{\BG}$-end points of  $X$, but we
need exterior spaces with some additional conditions.

\begin{definition}   An exterior space $(X,\varepsilon(X))$ is said to be first-countable at infinity
if $\varepsilon(X)$ contains a countable base $E_{0}\supset
E_{1}\supset E_{2}\supset \cdots $; that is, each  $E_i \in
\varepsilon(X)$ and,   for every $E\in\varepsilon(X)$, there is
$i\in \N$ such that $E_{i}\subset E$.
\end{definition}

\begin{proposition}\label{pipi} Let $(X,\varepsilon(X))$ be a first-countable at infinity exterior space. Then,
there is a canonical injective map $\theta_X \colon\check \pi_{0}(X) \to  \pi_{0}^{\BG}(X)$  such that the image of this map
is  the set $\{a \in  \pi_{0}^{\BG}(X)\mid S(a)=a\}$. Moreover, $\eta_X$ is surjective and the map $R_X$ factorizes as
$$\xymatrix{\pi_0^{\rm S}(X)\ar[rr]^{R_X} \ar@{>>}[rd]_{\eta_X} & & \pi_0^{\BG}(X)  \\
&\F(X) \ar@{>->}[ru]_{\theta_X}}$$ Consequently, for exterior
spaces which are first-countable at infinity, $\theta_X\colon
\F(X) \to \pi_0^{\BG}(X) $ is the equalizer of the identity and
the shift map.
\end{proposition}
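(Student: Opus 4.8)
The plan is to construct $\theta_X$ directly using the countable base. Given $(C_i)_{i\in\N} \in \check\pi_0(X) = \lim_{i} \pi_0(E_i)$ (where the cofinal tower $E_0 \supset E_1 \supset \cdots$ suffices to compute the inverse limit), each $C_i$ is a path component of $E_i$ with the inclusion $E_{i+1} \hookrightarrow E_i$ carrying $C_{i+1}$ into $C_i$. First I would choose, for each $i$, a path $\gamma_i \colon [i, i+1] \to C_i \subset E_i$ joining a chosen point $x_i \in C_{i+1} \subset C_i$ to $x_{i+1} \in C_{i+1}$; concatenating the $\gamma_i$ yields a map $\alpha \colon \R_+ \to X$. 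I must check $\alpha$ is exterior: given $E \in \varepsilon(X)$, first-countability gives $E_k \subset E$, and since $\gamma_i$ lands in $E_i \subset E_k \subset E$ for $i \geq k$, we get $\alpha^{-1}(E) \supset [k, +\infty)$, which is co-compact in $\R_+$; hence $\alpha^{-1}(E) \in \varepsilon(\R_+)$. Restricting to $\N$ gives a Brown–Grossman end point, and I would set $\theta_X((C_i)) = [\alpha|_\N]$. Independence of the choices of $x_i$ and $\gamma_i$ follows because two such choices differ by paths within the same path components $C_i$, which patch into an exterior homotopy $\N \bar\times I \to X$ by the same cofinality argument; this also gives that $\theta_X = \theta_X \circ (\text{forgetful})$ factors through $\check\pi_0$ well-definedly, i.e. $\theta_X$ is well defined.

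Next I would verify the three assertions about $\theta_X$. \textbf{Injectivity}: if $[\alpha|_\N] = [\beta|_\N]$ in $\pi_0^{\BG}(X)$ for representatives built from $(C_i)$ and $(D_i)$, an exterior homotopy $H\colon \N\bar\times I \to X$ witnesses this; for each $E \in \varepsilon(X)$, pick $E_k \subset E$, then for $n \geq k$ the images $H(\{n\}\times I)$ lie in $E$, in particular $\alpha(n)$ and $\beta(n)$ lie in the same path component of $E$ — but $\alpha(n) \in C_n$ and $\beta(n) \in D_n$ and for $n$ large $C_n, D_n$ both refine into $\pi_0(E_k) \to \pi_0(E)$, forcing $C_n$ and $D_n$ to map to the same component; so $(C_i) = (D_i)$. \textbf{Image lies in the $S$-fixed set}: the shift $s\colon \N \to \N$ precomposed with $\alpha|_\N$ just drops the first point, and reparametrizing shows $[\alpha s|_\N] = [\alpha|_\N]$ via the obvious exterior homotopy, so $S(\theta_X(c)) = \theta_X(c)$. \textbf{Surjectivity onto the $S$-fixed set}: given $[\beta] \in \pi_0^{\BG}(X)$ with $S[\beta] = [\beta]$, an exterior homotopy from $\beta s$ to $\beta$ lets us, for each $E_i$, produce a path in $X$ joining $\beta(n)$ to $\beta(n+1)$ that eventually stays inside $E_i$; assembling gives a base ray $\alpha\colon \R_+ \to X$ with $[\alpha|_\N] = [\beta]$, and then $C_i := $ the path component of $E_i$ containing $\alpha(t)$ for large $t$ is well defined and defines an element of $\check\pi_0(X)$ mapping to $[\beta]$. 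This simultaneously proves $\eta_X$ is surjective: given such $\alpha$, $\eta_X([\alpha]) = \check\pi_0(\alpha)(+\infty) = (C_i) \in \F(X)$, and every element of $\F(X)$ arises this way by the base-ray construction of the previous paragraph applied to an arbitrary $(C_i)$.

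Finally, the factorization $R_X = \theta_X \circ \eta_X$ is essentially a definition-chase: for $[\alpha] \in \pi_0^{\rm S}(X)$, $\eta_X([\alpha]) = \check\pi_0(\alpha)(+\infty)$ is the compatible system of components $(C_i)$ with $\alpha([t,\infty)) \subset C_i$ for $t$ large, and running the $\theta_X$ construction on this system recovers (up to exterior homotopy rel the tail) the ray $\alpha$ itself, hence $\theta_X(\eta_X([\alpha])) = [\alpha|_\N] = R_X([\alpha])$. Since $\eta_X$ is surjective and $\theta_X$ is injective with image the $S$-equalizer, the composite $R_X$ has the same image, which matches the equalizer description stated before the proposition. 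The main obstacle I anticipate is the careful bookkeeping in the surjectivity/base-ray construction: turning the hypothesis $S[\beta]=[\beta]$ (a single exterior homotopy $\N\bar\times I\to X$) into a genuine continuous ray $\R_+ \to X$ whose tail behavior is controlled by \emph{every} $E \in \varepsilon(X)$ simultaneously — this is exactly where first-countability at infinity is indispensable, since it reduces "for all $E$" to "for all $E_i$" and lets a single diagonal/telescoping argument handle the whole externology at once.
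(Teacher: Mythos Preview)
Your proposal is correct and follows essentially the same route as the paper. The only cosmetic difference is that you build a full exterior ray $\alpha\colon\R_+\to X$ first and then restrict to $\N$ to define $\theta_X$, whereas the paper defines $\theta_X$ directly by picking $x_i\in C_i$ and setting $\alpha(i)=x_i$, postponing the ray construction (via $\beta(r)=F(\lfloor r\rfloor, r-\lfloor r\rfloor)$ from the homotopy $F$ witnessing $S[\alpha]=[\alpha]$) to the surjectivity step; the underlying ideas and the use of first-countability are identical.
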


\begin{proof} Since $X$ is  first-countable at infinity, we have a countable base:
$E_{0}\supset E_{1}\supset E_{2}\supset \cdots $. By the definiton
of $\check \pi_0(X)$, it follows that $\check \pi_0(X) = \lim
\pi_0(E_i)$. Therefore an end point $a \in \check \pi_0(X)$ can be
represented  by a sequence of path components $C_{0}\supset
C_{1}\supset C_{2}\supset \cdots$. For every $i \in \N$, take a
point $x_{i}\in C_{i}$ and define a map $\alpha\colon \N \to X$ by
$\alpha(i)=x_{i}$. Then, the map $\theta_X \colon \check
\pi_{0}(X) \to  \pi_{0}^{\BG}(X)$ is given by $\theta_X
(a)=[\alpha]$. It is easy to check that the definition of
$\theta_X$ does not depend on the  chosen base nor the  chosen
points and that it is injective.

Now  if  $x_i,x_{i+1}\in C_i$, then there is path $F_i\colon I \to
C_i$ from $x_i$ to $x_{i+1}$. The exterior homotopy $F \colon \N
\bar \times I \to X$ given  by $F(i, t)=F_{i}(t)$ satisfies that
$F(i,0)=x_{i}$ and $F(i,1)=x_{i+1}$. This implies that $S(
\theta_X (a))= \theta_X (a)$. Conversely, take $\alpha \colon \N
\to X$ and suppose that $S([\alpha])=[\alpha s]=[\alpha]$. Then,
there is an exterior homotopy $F\colon \N \bar \times I \to X$
from $\alpha $ to $\alpha s$ and we can define an exterior map
$\beta \colon  \R_{+} \to X$ by $\beta(r)= F(E(r), r-E(r))$, where
$E(r)$ is the integer part of $r\geq 0$.  It is easy to check that
$\theta_X (\check  \pi_{0} (\beta) (+\infty )) =[\alpha]$. The
rest of the proof follows straightforward.
\end{proof}

\begin{remark} \label{bar-exterior} In order to obtain a more complete description of the relationships between constructions and invariants associated to an exterior space and distinguished subsets of discrete semi-flows the authors
think that some further work  needs  be done using the following new notions: Given two exterior spaces $(X, \varepsilon(X)), (X', \varepsilon(X'))$ a bar-exterior map $f:
(X, \varepsilon(X)) \to (X', \varepsilon(X'))$ is a continuous map
$f\colon X \to X'$ such that for every $E'\in \varepsilon(X')$,
there is $E\in \varepsilon(X)$ such that $f(\overline{E})\subset \overline{E'}$. Note that an exterior map is always a bar-exterior map. This yields to a large category
${\bf \bar E}$, a new homotopy category $\pi({\bf \bar E})$
and new invariants:

The {\bf ${\bar\pi}_0^{\BG}$-end set} of $X$ is given by
${\bar\pi}_0^{\BG}(X)=\pi {\bf (\bar E)}(\N,X).$

The {\bf ${\bar \pi}_0^{\rm S}$-end set} of $X$ is given by
${\bar\pi}_0^{\rm S}(X)=\pi ({\bf\bar E})(\R_+,X).$

In the present paper, we add some remarks about some possible new results connected with the notion of bar-exterior map. The new notions are related to pure topological descriptions of Julia and Fatou sets and  a more complete study needs to be done to clarify these interesting connections.

\end{remark}

\begin{remark} It is interesting to observe that taking the
categories ${ \bf\bar E}$ and $\pi ({ \bf\bar E})$,
there is a new exact sequence
of homotopy groups associated to an exterior space $X$ with a base
ray $\alpha \colon \R_+ \to X$
$$\xymatrix{\cdots \ar[r] &{\bar\pi}_{q}^{\rm S}(X)\ar[r]   &{\bar\pi}_{q}^{\BG}(X) \ar[r]^{\Id-S}  &{\bar \pi}_{q}^{\BG}(X) \ar[r]  &{\bar \pi}_{q-1}^{\rm S}(X) \ar[r]
& \cdots}$$
The new ``bar-invarinants" can be related  to invariants above
using the exact sequences associated
to the pairs $(\overline{E}, E)_{E\in \varepsilon(X)}$ that will
reflect the differences between both type of invariants.
\end{remark}

\section{Exterior discrete semi-flows}

In this section, we introduce the notion of exterior discrete semi-flow, which combines the
notions of exterior space and discrete semi-flow.

\begin{definition}  Let $X$ be an exterior space.
An  {\bf exterior discrete semi-flow } is a discrete semi-flow
$\phi \colon \N {\times}  X \to  X$ such that, for any $n\in \N$,
$\phi^{n}\colon X \to X$ is exterior (this is equivalent to the
simpler condition $\phi^{1}\colon X \to X$ is  exterior).

An exterior discrete semi-flow $X$ is said to be  {\bf
$\d$-exterior} if it satisfies that $\phi_x \colon \N \to X$ is
exterior, for every $x\in X$.

An {\bf exterior discrete semi-flow  morphism} of exterior
discrete semi-flows $f \colon X \to Y$ is a discrete semi-flow
morphism  such that $f$ is exterior.
\end{definition}

\begin{remark} The condition which asserts that  $\phi_x \colon \N \to X$
is exterior for every $x\in X$ can be replaced
by the requirement that  $\phi \colon \N\bar\times X_{\d} \to  X$
is exterior, where $X_{\d}$ denotes the set  $X$ provided with the
discrete topology. This fact justifies the use of the term
``$\d$-exterior".
\end{remark}

Denote by $\bf{EF(\N)}$ the category of exterior discrete
semi-flows and by  $\bf{E^{\d}F(\N)}$, the full subcategory  of
$\d$-exterior discrete semi-flows.

We adopt the following notational convention: an exterior discrete
semi-flow will be denoted by a triplet $(X, \phi,
\varepsilon(X))$. Nevertheless, when the action $\phi$ or the
externology are clear on a determined context, we will shorten
the notation and we will use $(X,\varepsilon(X))$ or $(X,\phi)$;
moreover, in many cases the notation will be reduced to $X$.
\medskip

We have defined above the limit space of an exterior space. In
particular, since an exterior discrete semi-flow $X$ is an
exterior space, we can consider the limit space $L(X)$ and  the
bar-limit space $\bar{L}(X)$.

\begin{proposition} Let $(X, \phi, \varepsilon(X))$ be an exterior
discrete semi-flow. Then, $L(X), { \bar L}(X)$ are
right-invariant.
\end{proposition}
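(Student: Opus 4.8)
The claim is that for an exterior discrete semi-flow $(X,\phi,\varepsilon(X))$, the subspaces $L(X)=\bigcap_{E\in\varepsilon(X)}E$ and $\bar L(X)=\bigcap_{E\in\varepsilon(X)}\overline E$ are right-invariant, i.e. $\phi^1(L(X))\subset L(X)$ and $\phi^1(\bar L(X))\subset \bar L(X)$. The plan is to exploit directly the defining property of an exterior discrete semi-flow: $\phi^1\colon X\to X$ is an exterior map, so $(\phi^1)^{-1}(E)\in\varepsilon(X)$ for every $E\in\varepsilon(X)$.

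For $L(X)$: let $x\in L(X)$ and fix an arbitrary $E\in\varepsilon(X)$. Since $\phi^1$ is exterior, $(\phi^1)^{-1}(E)\in\varepsilon(X)$, hence $x\in (\phi^1)^{-1}(E)$ because $x$ lies in the intersection of \emph{all} members of the externology. Therefore $\phi^1(x)\in E$. As $E$ was arbitrary, $\phi^1(x)\in\bigcap_{E\in\varepsilon(X)}E=L(X)$. This shows $\phi^1(L(X))\subset L(X)$.

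For $\bar L(X)$: the same idea works but one first passes through closures. Let $x\in\bar L(X)$ and fix $E\in\varepsilon(X)$. Put $E'=(\phi^1)^{-1}(E)\in\varepsilon(X)$; then $x\in\overline{E'}$. Since $\phi^1$ is continuous, $\phi^1(\overline{E'})\subset\overline{\phi^1(E')}\subset\overline{E}$, where the last inclusion holds because $\phi^1(E')=\phi^1((\phi^1)^{-1}(E))\subset E$. Hence $\phi^1(x)\in\overline E$, and since $E\in\varepsilon(X)$ was arbitrary, $\phi^1(x)\in\bigcap_{E\in\varepsilon(X)}\overline E=\bar L(X)$.

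The argument is short and the only point requiring any care — the mild ``obstacle'' — is making sure to invoke continuity of $\phi^1$ in the bar-limit case to get $\phi^1(\overline{E'})\subset\overline{\phi^1(E')}$, rather than attempting to work with closures directly in the externology (closures of exterior open sets need not be open, so one cannot stay inside $\varepsilon(X)$). Note also that only right-invariance is asserted, not complete invariance: there is no reason for $(\phi^1)^{-1}(L(X))$ to be contained in $L(X)$, since $\phi^1$ need not be surjective nor a homeomorphism, so no claim in that direction is made.
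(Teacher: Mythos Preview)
Your proof is correct and is essentially the same argument as the paper's own proof: both use that $(\phi^1)^{-1}(E)\in\varepsilon(X)$ for every $E\in\varepsilon(X)$ to conclude $L(X)\subset(\phi^1)^{-1}(L(X))$, and for $\bar L(X)$ both invoke continuity of $\phi^1$ via $\phi^1(\overline{E'})\subset\overline{\phi^1(E')}$. The paper phrases it as a chain of inclusions of intersections, while you argue pointwise, but the content is identical.
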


\begin{proof} Denote $f=\phi^{1}$. Since $f$ is an exterior map, it follows that
$f^{-1}(E) \in \varepsilon(X)$ for every $E\in \varepsilon(X)$.
Then, one has that $$f^{-1}(L(X))=f^{-1}\left(\bigcap_{E\in
\varepsilon(X)} E\right)=\bigcap_{E\in  \varepsilon(X)}f^{-1}
(E)\supset \bigcap_{E\in  \varepsilon(X)} E=L(X).$$ This implies
that $f(L(X))\subset L(X)$. For $ {\bar L}(X)$, the proof is
similar using also the fact that if  $f$ is continuous,
$f(\overline{E}) \subset \overline{f(E)}$ for every $E\in
\varepsilon(X)$.
\end{proof}

For an exterior discrete semi-flow $X$, one has that the
exterior map $\phi^{1}\colon X \to X$ induce the maps
$\pi_{0}^{S}(\phi^{1})$,
$\pi_{0}^{\BG}(\phi^{1})$,
${\check \pi}_{0}(\phi^{1})$, ${\check {\bar \pi}}_{0}(\phi^{1})$ that give canonical
discrete semi-flow structures on the corresponding sets $\pi_{0}^{S}(X)$,  $\pi_{0}^{\BG}(X)$,
${\check \pi}_{0}(X)$, ${\check {\bar \pi}}_{0}(X)$. These
exterior homotopy invariants can be taken as a set together with a
discrete semi-flow structure.

\section{The regions of attraction of an exterior discrete semi-flow}

Now, suppose that $(X,\phi, \varepsilon(X))$ is an exterior
discrete semi-flow. Consider $$D(X)=\{x\in X| \phi_{x} \mbox{ is
exterior}\}.$$ In a similar way, one can define $\bar D(X)$ as the
set of points $x\in X$ such that, for every $E\in \varepsilon(X)$,
there is $n_E\in \N$ such that, for every $n\geq n_E$, $\phi^n(x)
\in \overline{E}$. The subspace $\bar D(X)$ has many similar
properties to those of the subspace $D(X)$, whose proofs are left
to the reader. However, the most    sophisticated proofs  will be
included.

\begin{proposition} For  an exterior discrete semi-flow
$X=(X,\phi, \varepsilon(X))$, the subspace $D(X)$ is completely
invariant and, with the relative externology (i.e., the family of
intersections of the form $D(X)\cap E$, $E \in \varepsilon(X)$),
is a $\d$-exterior discrete semi-flow.
\end{proposition}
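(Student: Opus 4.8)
The plan is to verify the three required assertions in turn: that $D(X)$ is right-invariant, that it is left-invariant, and that with the relative externology the restricted semi-flow is $\d$-exterior. Throughout write $f=\phi^1$, so that $\phi^n = f^n$ and $\phi_x(n) = \phi(n,x)$.

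First I would handle right-invariance. Fix $x\in D(X)$; I must show $f(x)\in D(X)$, i.e.\ that $\phi_{f(x)}\colon \N \to X$ is exterior. The key observation is that $\phi_{f(x)} = \phi_x\circ s$, where $s\colon \N\to\N$ is the shift $s(i)=i+1$: indeed $\phi_{f(x)}(i) = \phi(i,\phi(1,x)) = \phi(i+1,x) = \phi_x(s(i))$ by the semi-flow identity. Since $\N$ carries the co-compact externology and $s$ is a proper continuous map (preimages of co-compact sets are co-compact), $s$ is an exterior map $\N\to\N$; composing the exterior maps $\phi_x$ and $s$ gives that $\phi_{f(x)}$ is exterior, so $f(x)\in D(X)$ and $f(D(X))\subset D(X)$.

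Next, left-invariance: I must show $f^{-1}(D(X))\subset D(X)$, i.e.\ if $f(x)\in D(X)$ then $x\in D(X)$. Fix $E\in\varepsilon(X)$. Since $f$ is exterior, $f^{-1}(E)\in\varepsilon(X)$, so because $f(x)\in D(X)$ there is $n_0\in\N$ with $\phi^n(f(x))\in f^{-1}(E)$ for all $n\geq n_0$; that is $\phi^{n+1}(x)\in f^{-1}(E)$, hence $f(\phi^{n+1}(x)) = \phi^{n+2}(x)\in E$ for all $n\geq n_0$, so $\phi^m(x)\in E$ for all $m\geq n_0+2$. Thus $\phi_x^{-1}(E)$ contains a co-final interval of $\N$, which means it is co-compact, i.e.\ in $\varepsilon^{\e}(\N)$. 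As $E$ was arbitrary, $\phi_x$ is exterior and $x\in D(X)$. Combined with the previous paragraph, $D(X)$ is completely invariant; in particular $f(D(X))\subset D(X)$, so $\phi$ restricts to a discrete semi-flow on $D(X)$, and the restriction of an exterior map to a (completely) invariant subspace with the relative externology is again exterior, so $(D(X),\phi|,\, \varepsilon(X)\cap D(X))$ is an exterior discrete semi-flow.

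Finally, to see it is $\d$-exterior I must check that for every $x\in D(X)$ the map $\phi_x\colon \N \to D(X)$ is exterior with respect to the relative externology on $D(X)$. A basic member of that externology is $D(X)\cap E$ with $E\in\varepsilon(X)$, and $\phi_x^{-1}(D(X)\cap E) = \phi_x^{-1}(E)$ since the whole trajectory of $x$ lies in $D(X)$ by complete invariance; and $\phi_x^{-1}(E)\in\varepsilon^{\e}(\N)$ precisely because $x\in D(X)$ means $\phi_x$ is exterior as a map into $X$. Hence $\phi_x$ is exterior as a map into $D(X)$, which is exactly the $\d$-exterior condition. I expect the only subtle point to be bookkeeping the difference between ``exterior into $X$'' and ``exterior into $D(X)$'': this is precisely where complete invariance of $D(X)$ is used, since it guarantees $\phi_x$ actually lands in $D(X)$ and that preimages of relative exterior opens coincide with preimages of exterior opens of $X$. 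All other steps are routine manipulations of the co-compact externology on $\N$ and the defining property of exterior maps.
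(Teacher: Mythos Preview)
Your proof is correct and follows essentially the same approach as the paper's: right-invariance via $\phi_{f(x)}=\phi_x\circ s$ with $s$ exterior, left-invariance by pushing the co-final tail of the trajectory of $f(x)$ back to $x$, and the $\d$-exterior property by observing that preimages under $\phi_x$ of relative exterior opens coincide with preimages of exterior opens of $X$. The only minor difference is that in the left-invariance step you route through $f^{-1}(E)$ rather than applying $\phi_{f(x)}$ directly to $E$ (which would save a step, since $\phi^m(f(x))=\phi^{m+1}(x)\in E$ already gives what you need), but this is an inessential detour.
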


\begin{proof} Denote $D=D(X)$. Observe that $\phi_{\phi^{1}(x)}= \phi_{x} \,s$,
where the shift $s$, $s(n)=n+1$, is an exterior map. This implies
that, if $x\in D$, then  $\phi^1 (x) \in D$.

Now, in order to prove that $D$ is left-invariant,  suppose that
$\phi^{1}(x)=y$ and $y\in D$. Given an exterior open subset $E$,
there is $n\geq 1$ such that $\phi^{m}(y)\in E$, for every $m\geq
n$. Then, $\phi^{k}(x)\in E$, for every $k\geq n+1$. This implies
that $x\in D$.

Finally, since $X$ is an exterior discrete semi-flow, taking on
$D$ the relative externology, it follows  that  $\phi^{n}\colon D
\to D$ is exterior, for every $n\in \N$. By the definition of $D$
one has that $\phi_x \colon D \to  D$ is exterior, for every $x
\in D$. Therefore, $D$ with the relative externology is a
$\d$-exterior discrete semi-flow.
\end{proof}

\begin{definition} Suppose that $X=(X,\phi, \varepsilon(X))$ is an exterior discrete semi-flow.
The $\d$-exterior discrete semi-flow $D(X)$ is said to be {\bf the
region of pseudo-attraction} of $\varepsilon(X)$. The exterior
discrete semi-flow $\bar D(X)$ is said to be {\bf the region of
pseudo-bar-attraction} of $\varepsilon(X)$.
\end{definition}

\begin{remark} Notice that  the canonical functor $\bf{EF(\N)} \to\bf{E^{\d}F(\N)}$, which
carries  $X$ to $D(X)$, is right adjoint to the inclusion functor
$\bf{E^{\d}F(\N)}  \to \bf{EF(\N)}.$
\end{remark}

\begin{remark} Using the notion of bar-exterior map mentioned in Remark \ref{bar-exterior} one has the following alternative definition of $\bar D(X)$ for an exterior discrete semi-flow $X$,
$\bar D(X)=\{x \in X \mid \phi_x\text{  is bar-exterior}\}$.
\end{remark}

If $X=(X,\phi, \varepsilon(X))$ is an exterior discrete semi-flow, the inclusion $D(X)\subset X$ of exterior spaces
induces the transformation $L(D(X)) \to L(X)$.

\begin{proposition}\label{eledeigualele} Let $X=(X,\phi, \varepsilon(X))$ be an exterior discrete semi-flow.
Then, $L(D(X))= L(X)$ (similarly, $\bar L(\bar D(X))= \bar L(X)$).
\end{proposition}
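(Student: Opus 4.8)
The plan is to prove the set equality $L(D(X)) = L(X)$ by double inclusion, using only the definitions of the limit space as an intersection of exterior open subsets and the relative externology on $D(X)$. First I would record what each side is: since $D(X)$ carries the relative externology $\{D(X) \cap E \mid E \in \varepsilon(X)\}$, we have
$$L(D(X)) = \bigcap_{E \in \varepsilon(X)} \bigl(D(X) \cap E\bigr) = D(X) \cap \bigcap_{E \in \varepsilon(X)} E = D(X) \cap L(X),$$
so the inclusion $L(D(X)) \subset L(X)$ is immediate and the whole statement reduces to proving $L(X) \subset D(X)$, i.e. that every point of $L(X)$ lies in $D(X)$ — equivalently, that for $x \in L(X)$ the map $\phi_x \colon \N \to X$ is exterior.

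The key observation is that if $x \in L(X) = \bigcap_{E \in \varepsilon(X)} E$, then for every $E \in \varepsilon(X)$ we have $x \in E$, and moreover, since $L(X)$ is right-invariant (proved in the Proposition preceding this one), $\phi^n(x) \in L(X) \subset E$ for all $n \in \N$. Thus $\phi_x^{-1}(E) = \{n \in \N \mid \phi^n(x) \in E\} = \N$, which is certainly a member of the co-compact externology $\varepsilon^{\mathbf{c}}(\N)$ (its complement $\emptyset$ is compact and closed). Hence $\phi_x$ pulls every exterior open subset of $X$ back to an exterior open subset of $\N$; since $\phi_x$ is automatically continuous, it is an exterior map, so $x \in D(X)$. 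This gives $L(X) \subset D(X)$, and combined with the first paragraph, $L(D(X)) = D(X) \cap L(X) = L(X)$.

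For the bar-limit statement $\bar L(\bar D(X)) = \bar L(X)$, I would run the same argument with closures: $\bar L(\bar D(X)) = \bigcap_{E}\overline{\bar D(X) \cap E}$, which one shows equals $\bar D(X) \cap \bar L(X)$ (here one uses that the closure of $\bar D(X) \cap E$ within the subspace $\bar D(X)$ is $\bar D(X) \cap \overline{E}$), reducing to $\bar L(X) \subset \bar D(X)$; and the right-invariance of $\bar L(X)$, again from the preceding Proposition, gives $\phi^n(x) \in \bar L(X) \subset \overline{E}$ for all $n$ whenever $x \in \bar L(X)$, so that $x \in \bar D(X)$ by the definition of $\bar D(X)$.

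The only mildly delicate point — and the one I would be most careful about — is the handling of closures in the bar-limit case: one must check that taking the closure in the subspace $\bar D(X)$ commutes appropriately with the intersection over $\varepsilon(X)$, i.e. that $\bigcap_E \overline{\bar D(X) \cap E}^{\,\bar D(X)} = \bar D(X) \cap \bigcap_E \overline{E}$. The inclusion $\subset$ follows from $\overline{\bar D(X) \cap E}^{\,\bar D(X)} \subset \bar D(X) \cap \overline{E}$, which is standard; the reverse inclusion is where the right-invariance of $\bar L(X)$ does the real work, exactly as above. Everything else is a routine unwinding of definitions, which is presumably why the authors stated the non-barred case in full and left the barred case parenthetical.
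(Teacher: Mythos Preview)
The paper states this proposition without proof, so there is nothing to compare against; I assess your argument on its own.

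Your proof of $L(D(X))=L(X)$ is correct and is exactly the natural one: since the limit is a plain intersection, $L(D(X))=D(X)\cap L(X)$ holds on the nose, and right-invariance of $L(X)$ (from the preceding proposition) gives $L(X)\subset D(X)$, finishing the argument.

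The bar-limit sketch, however, has a real gap. You correctly obtain $\bar L(\bar D(X))\subset \bar D(X)\cap \bar L(X)$ from $\overline{\bar D(X)\cap E}^{\,\bar D(X)}\subset \bar D(X)\cap\overline E$, and right-invariance of $\bar L(X)$ does give $\bar L(X)\subset \bar D(X)$. But the remaining inclusion $\bar L(X)\subset \bar L(\bar D(X))$ asks that each $x\in\bar L(X)$ lie in $\overline{\bar D(X)\cap E}^{\,\bar D(X)}$ for every $E\in\varepsilon(X)$; when $x\in\overline E\setminus E$ this requires every neighbourhood of $x$ to meet $\bar D(X)\cap E$, i.e.\ that nearby points of $E$ already belong to $\bar D(X)$, and nothing in the hypotheses forces this. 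Your phrase ``exactly as above'' does not transfer, because in the $L$-case the equality $L(D(X))=D(X)\cap L(X)$ was exact, whereas after taking closures only one inclusion survives. Indeed the bar-claim can fail outright: take $X=\R$, $\phi^1=\id$, and externology generated by $E_n=(0,1/n)$; then $\bar L(X)=\bar D(X)=\{0\}$, the relative externology on $\{0\}$ contains $\{0\}\cap E_n=\emptyset$, and hence $\bar L(\bar D(X))=\emptyset\neq\{0\}=\bar L(X)$. So the parenthetical ``similarly'' in the paper appears to require an additional hypothesis that you should not try to supply by analogy alone.
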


For an exterior discrete  semi-flow, some relations between  regions of attraction associated to its externology and
standard regions of attraction as a discrete semi-flow as the following:

\begin{theorem}\label{regionattraction}
Suppose that $X=(X,\phi, \varepsilon(X))$ is an exterior discrete semi-flow. Then,
\begin{itemize}
\item[(i)] $D(X) \subset  \bar D(X) \subset  \text{PA}(\bar L (X))$;
\item[(ii)] If $(X,\phi)$ is  Lagrange stable at every point in $ \bar D(X)$, then $\bar D(X) \subset \text{A}(\bar L (X))$;
\item[(iii)] If $(X,\phi)$ is Lagrange stable at every point in $\text{A}(L (X))$, then $\text{A}(L (X)) \subset D(X)$;
\item[(iv)] If $(X,\phi)$ is  Lagrange stable at every point in $\text{PA}(L (X))$, then $\text{PA}(L(X)) \subset D(X)$;
\item[(v)] If $(X,\phi)$ is  Lagrange stable at every point in $\text{PA}(L (X))$ and $L(X)=\bar L(X)$, then
$D(X)=\text{PA}(L (X))=A(L(X)) $.
\end{itemize}
\end{theorem}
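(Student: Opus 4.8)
The plan is to establish the five inclusions in sequence, exploiting the characterizations of $D(X)$, $\bar D(X)$ and the attraction regions from Definition~\ref{region}, together with Lemma~\ref{attraction} and Proposition~\ref{eledeigualele}.

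For (i), the first inclusion $D(X)\subset\bar D(X)$ is immediate since $E\subset\overline E$ for every $E\in\varepsilon(X)$, so if $\phi_x$ is exterior then for each $E$ the tail $\phi^n(x)$ lies in $E\subset\overline E$ eventually. For the second inclusion, let $x\in\bar D(X)$ and take $y\in\Lambda(x)$; we must show $y\in\bar L(X)=\bigcap_{E\in\varepsilon(X)}\overline E$. Fix $E\in\varepsilon(X)$; by definition of $\bar D(X)$ there is $n_E$ with $\phi^n(x)\in\overline E$ for all $n\geq n_E$, so the net witnessing $y\in\Lambda(x)$ eventually lies in the closed set $\overline E$, hence $y\in\overline E$. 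As $E$ was arbitrary, $y\in\bar L(X)$, so $\Lambda(x)\subset\bar L(X)$, i.e. $x\in\text{PA}(\bar L(X))$.

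For (ii): if $(X,\phi)$ is Lagrange stable at every point of $\bar D(X)$, then for $x\in\bar D(X)$ the closure $\overline{\N\cdot x}$ is compact, hence $\Lambda(x)=\bigcap_{n\geq0}\overline{[n,+\infty)\cdot x}$ is a nested intersection of nonempty compact sets, so $\Lambda(x)\neq\emptyset$; combined with (i) this gives $\emptyset\neq\Lambda(x)\subset\bar L(X)$, i.e. $x\in\text{A}(\bar L(X))$. For (iii): let $x\in\text{A}(L(X))$, so $\emptyset\neq\Lambda(x)\subset L(X)=\bigcap_{E\in\varepsilon(X)}E$. Fix $E\in\varepsilon(X)$; since $X$ is Lagrange stable at $x$, the sets $\overline{[n,+\infty)\cdot x}$ are compact and decreasing, and $\bigcap_n\overline{[n,+\infty)\cdot x}=\Lambda(x)\subset E$. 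By compactness (the finite intersection property applied to the compact sets $\overline{[n,+\infty)\cdot x}\setminus E$, which have empty total intersection), there is $n_E$ with $\overline{[n_E,+\infty)\cdot x}\subset E$, hence $\phi^n(x)\in E$ for all $n\geq n_E$. Since $E$ was arbitrary, $\phi_x$ is exterior, so $x\in D(X)$. Item (iv) follows from (iii) and Lemma~\ref{attraction}(iv): Lagrange stability on $\text{PA}(L(X))$ forces $\text{PA}(\emptyset)=\emptyset$ and $\text{PA}(L(X))=\text{A}(L(X))\subset D(X)$. Finally (v): under the hypotheses of (iv) we already have $\text{PA}(L(X))=\text{A}(L(X))\subset D(X)$; for the reverse inclusion, $D(X)\subset\bar D(X)\subset\text{PA}(\bar L(X))=\text{PA}(L(X))$ by (i) and the assumption $L(X)=\bar L(X)$, giving the chain of equalities.

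The main obstacle is the compactness argument in (iii): one must pass from $\bigcap_n\overline{[n,+\infty)\cdot x}\subset E$ (an intersection statement) to the existence of a single index $n_E$ with $\overline{[n_E,+\infty)\cdot x}\subset E$. This is exactly where Lagrange stability is used — the sets $\overline{[n,+\infty)\cdot x}$ are compact subsets of the compact space $\overline{\N\cdot x}$, they are nested, and their complements in $\overline{\N\cdot x}$ relative to the open set $E$ form a nested family of compact sets with empty intersection, so one of them is already empty. Everything else is a routine unwinding of definitions plus the already-proved Lemma~\ref{attraction}.
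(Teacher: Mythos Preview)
Your proof is correct and follows essentially the same route as the paper's: parts (i), (ii), (iv), (v) match almost verbatim, while for (iii) you argue directly via the finite intersection property on the nested compact sets $\overline{[n,+\infty)\cdot x}\setminus E$, whereas the paper argues by contradiction, extracting a convergent subnet of $\phi^{n_k}(x)\in X\setminus E$ to produce a point of $\Lambda(x)\setminus L(X)$. These are two equivalent phrasings of the same compactness argument, so there is no substantive difference.
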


\begin{proof}

\begin{enumerate}

\item[(i)] Obviously, $D(X) \subset \bar D(X)$. If $x \in \bar D(X)$ and $E\in \varepsilon(X)$,
there is $n_E \in \N$, such that $[n_E, \infty) \cdot x \subset
\overline{E}$. Then, $\overline{[n, \infty) \cdot x } \subset
\overline{E}$. This implies that $\Lambda(x) \subset
\overline{E}$, for every $E\in \varepsilon(X)$. Hence, $\Lambda(x)
\subset \bar L (X).$

\item[(ii)] If $(X,\phi)$ is Lagrange stable at a point $x$, we also have that $\Lambda(x)\not = \emptyset$.

\item[(iii)] Suppose that $x\in A(L (X))$ and $\phi_x$ is not exterior. Then, there is $E\in \varepsilon(X)$
and an increasing sequence $n_1<n_2< \cdots$ such that
$\phi^{n_k}(x) \in X\setminus E.$ Since $X$ is Lagrange stable at
$x$, there is a subnet $n_{k_i}$ such that $n_{k_i} \cdot x \to y
\in X \setminus E$. But we also have that $y \in
\Lambda(x)\setminus L(X)$ and this is a contradiction.

\item[(iv)] It can be proved in a similar way.

\item[(v)] It is a consequence of previous results and  Lemma \ref{attraction} (iv).
\end{enumerate}
\end{proof}

\begin{theorem}\label{regionattraction1} Suppose that $X=(X,\phi, \varepsilon(X))$ is an exterior
discrete semi-flow and  $(X,\phi)$ is Lagrange stable at every
point in $\text{PA}(\bar L (X))$. Then,
\begin{itemize}
\item[(i)]  $\text{PA}(\bar L (X)) \setminus \text{WA} (\bar L(X)\setminus L(X)) \subset \bar D(X)$,
\item[(ii)] If,  in addition,  $\text{WA}(\bar L(X)\setminus L(X))=\emptyset$, one has that
$$\text{A}(\bar L (X)) = \bar D(X).$$
\end{itemize}
\end{theorem}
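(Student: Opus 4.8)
The plan is to reduce both parts to the region-of-attraction identities already available. For part (i), I would start from a point $x\in \text{PA}(\bar L(X))\setminus \text{WA}(\bar L(X)\setminus L(X))$ and aim to show $\phi_x$ is bar-exterior, i.e. $x\in\bar D(X)$ by the characterization in the last Remark. By hypothesis $X$ is Lagrange stable at $x$, so by Lemma \ref{attraction}(iv) applied to $\text{PA}(\bar L(X))$ we have $\Lambda(x)\neq\emptyset$ and $\Lambda(x)\subset\bar L(X)$; moreover $\overline{\N\cdot x}$ is compact. Suppose, for contradiction, that $x\notin\bar D(X)$: then there is $E\in\varepsilon(X)$ and an increasing sequence $n_1<n_2<\cdots$ with $\phi^{n_k}(x)\in X\setminus\overline{E}$. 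By compactness of $\overline{\N\cdot x}$ pass to a subnet converging to some $y$; then $y\in\Lambda(x)\subset\bar L(X)$, but also $y\in X\setminus E$ (since $X\setminus\overline{E}\subset X\setminus E$ and the latter, being closed in... — here I must be slightly careful: $X\setminus E$ is closed, so the limit of points in $X\setminus\overline E\subset X\setminus E$ lies in $X\setminus E$). Hence $y\in\bar L(X)\setminus E$. Since $E\supset L(X)$, this forces $y\in\bar L(X)\setminus L(X)$, so $y\in\Lambda(x)\cap(\bar L(X)\setminus L(X))$, contradicting $x\notin\text{WA}(\bar L(X)\setminus L(X))$. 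This proves (i).

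For part (ii), I would prove the two inclusions of $\text{A}(\bar L(X))=\bar D(X)$ separately. The inclusion $\bar D(X)\subset\text{A}(\bar L(X))$ follows from Theorem \ref{regionattraction}(i),(ii): indeed $\bar D(X)\subset\text{PA}(\bar L(X))$, and since $X$ is Lagrange stable at every point of $\text{PA}(\bar L(X))$ it is in particular Lagrange stable at every point of $\bar D(X)$, so Theorem \ref{regionattraction}(ii) gives $\bar D(X)\subset\text{A}(\bar L(X))$. For the reverse inclusion, note first that Lemma \ref{attraction}(iv) applied to $S=\bar L(X)$ gives $\text{PA}(\emptyset)=\emptyset$ and $\text{PA}(\bar L(X))=\text{A}(\bar L(X))$, using the Lagrange-stability hypothesis. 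Now the hypothesis $\text{WA}(\bar L(X)\setminus L(X))=\emptyset$ makes the set difference in (i) trivial, so part (i) reads $\text{PA}(\bar L(X))\subset\bar D(X)$; combined with $\text{PA}(\bar L(X))=\text{A}(\bar L(X))$ this yields $\text{A}(\bar L(X))\subset\bar D(X)$, completing the equality.

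The main obstacle I anticipate is the topological bookkeeping in the contradiction argument of part (i): one must be careful that the sequence $\phi^{n_k}(x)$ leaving an exterior open $E$ can be chosen to leave $\overline{E}$ (not just $E$) so that its subnet limit lands in $X\setminus E$ rather than merely in $X\setminus\mathring{E}$ — this is exactly what the definition of $\bar D(X)$ in terms of $\overline{E}$ is designed to supply, and it is the point where the $\bar{}$-decorated region $\bar D(X)$, the bar-limit $\bar L(X)$, and the omega-limit $\Lambda(x)=\bigcap_{n}\overline{[n,+\infty)\cdot x}$ must be aligned correctly. Everything else is a routine assembly of Theorem \ref{regionattraction} and Lemma \ref{attraction}.
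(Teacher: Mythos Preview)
Your proposal is correct and follows essentially the same route as the paper's proof: for (i) the paper also argues by contradiction, picking $E\in\varepsilon(X)$ with $\phi^{n_k}(x)\in X\setminus\overline{E}$, passing to a convergent subnet via Lagrange stability to obtain $y\in\Lambda(x)\setminus L(X)$, and concluding $x\in\text{WA}(\bar L(X)\setminus L(X))$; for (ii) the paper simply cites (i), Theorem~\ref{regionattraction}(i), and Lemma~\ref{attraction}, which is exactly the assembly you spell out. Your write-up is in fact more explicit than the paper's (you make visible the step $E\supset L(X)$ and the use of $\Lambda(x)\subset\bar L(X)$ from $x\in\text{PA}(\bar L(X))$, which the paper leaves implicit), and the ``main obstacle'' you flag is precisely the point where $\bar D(X)$ being defined via $\overline{E}$ is needed.
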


\begin{proof}

\begin{enumerate}
\item[(i)] Suppose that $x\in \text{ \it PA}(\bar L (X)) \setminus \text{\it WA} (\bar L(X)\setminus L(X))$
and $x\notin \bar D(X)$. Then, there is $E\in \varepsilon(X)$
and an increasing sequence $n_1<n_2< \cdots$ such that
$\phi^{n_k}(x) \in X\setminus \bar E$. Since $X$ is Lagrange
stable at $x$, there is a subnet $n_{k_i}$ such that $n_{k_i}
\cdot x \to y \in X \setminus E$. Then, $y \in \Lambda(x)\setminus
L(X)$  and $x\in  \text{\it WA} (\bar L(X)\setminus L(X)),$  which is
a contradiction.

\item[(ii)] It follows by (i), Theorem \ref{regionattraction} (i) and Lemma \ref{attraction}.
\end{enumerate}
\end{proof}

\section{Basins  associated to end points in a region of attraction}

We note that, for an exterior discrete semi-flow $X=(X,\phi,
\varepsilon(X))$, each trajectory of a point of $D(X)$ has an end
point given as follows.

If $x \in D(X)$, we have the exterior map $\phi_{x} \colon \N \to
X$, $\phi_{x}(n)=\phi(n,x)$, which determines an end point
$[\phi_{x}]\in \pi_0^{\BG}(X)$.

Then, the following canonical map is obtained
$$\omega \colon D(X) \to \pi_0^{\BG}(X),$$
where $\omega(x)=[\phi_{x}]$.

Recall the map $\eta_X \colon \pi_0^S(X) \to \check \pi_0(X)$
given by $\eta_X([\alpha])= \check \pi_0(\alpha) (+\infty)$.

\begin{definition} \label{representable} Let $X=(X,\phi, \varepsilon(X))$ be an exterior discrete semi-flow.
\begin{itemize}
\item  An end
$a\in \pi^{\BG}_{0}(X)$ is said to be {\bf $\omega$-representable}
if there is $x\in D(X)$ such that  $\omega(x)=a$. Denote by
${^{\omega}\pi_0^{\BG}}(X)$ the set of $\omega$-representable
ends of $X$.
\item An end $a\in \F(X)$ is said to be {\bf $ \check \omega$-representable} if
there is $x\in D(X)$ such that, for every $E \in \varepsilon(X)$,
there is $n_E \in \N$ satisfying that, for every $n \geq n_E$,
$\phi^n(x) \in C_E$, where $C_E$ is the path component of $a$ in
$E$. Denote by ${^{\omega}\F}(X)$ the set of $\check
\omega$-representable  ends of $\F(X)$ and by $\check D(X)$  the
set of points $x\in D(X)$  such that there is $a\in \F(X)$
satisfying that, for every $E\in \varepsilon(X)$, there is $n_E$
such that, for every $n\geq n_E$, $\phi^n(x) \in C_E$, where $C_E$
is the component of $a$ in $E$.
\item An end $a=[\alpha]\in \pi_0^S(X)$, with $\alpha \colon \R_+ \to X$ an
exterior map, is said to be {\bf $ \check \omega$-$
\eta_X$-representable} if $\eta_X(a)$ is {\bf $\check
\omega$-representable}. Denote by ${^{\omega}\pi_0^S}(X)$ the set
of $ \check \omega$-$ \eta_X$-representable  ends of $\pi_0^S(X)$
and by $D^S(X)$ the set of points $x\in D(X)$ such that there is
$a=[\alpha]\in \pi_0^S(X)$ such that $[\alpha|_\N]=[\phi_x]$.
\end{itemize}
\end{definition}

From the definition of $\check D(X)$, it easy to see that there is an induced map
$$\check \omega \colon \check D(X) \to \check \pi_0(X)$$
which maps $x \in \check D(X) $ to the unique end point $a\in
\check \pi_0(X)$ such that, for every $E\in \varepsilon(X)$, there
is $n_E$ satisfying that, for every $n\geq n_E$, $\phi^n(x) \in
C_E$, where $C_E$ is the component of $a$ in $E$.

\begin{remark} Let $X=(X,\phi, \varepsilon(X))$ be an  exterior discrete semi-flow which is  first-countable at infinity.
In general, the diagram
$$\xymatrix{L(X) \ar[r]^{e} \ar[d] & \F(X)\ar[d]^{\theta_X}\\
D(X)\ar[r]^{\\\omega}  & {\pi_0^{\BG}}(X)}$$ is not commutative.
Nevertheless, the diagram commutes  if we have some  additional
conditions: ``for every $x\in L(X)$ and for every $E \in
\varepsilon(X)$, the points $x, \phi^1 (x)$ are in the same path
component of $E$'' (for instance, when $L(X)\subset \Fix(X)$).
Note that, under this condition,  $e$ agrees with $\check \omega$
on $\check D (X)\cap L(X)$ and the restriction to $\check D
(X)\cap L(X)$ gives a commutative diagram.
   \end{remark}

Recall that the maps $\pi_0^S(\phi^1)$,  $\F(\phi^1)$ and
$\pi_0^{\BG}(\phi^1)$ induce on $\pi_0^S(X)$,  $\F(X)$ and
$\pi_0^{\BG}(X)$  the structure of a discrete semi-flow (set),
respectively. In a similar way, ${^{\omega}\pi_0^S}(\phi^1)$,
${^{\omega}\F}(\phi^1)$ and   ${^{\omega}\pi_0^{\BG}}(\phi^1)$
induce on ${^{\omega}\pi_0^S}(X)$,  ${^{\omega}\F}(X)$ and
${^{\omega}\pi_0^{\BG}}(X)$  the structure of a discrete semi-flow
(set), respectively.

In the following proposition, the  analogues of the results
 of section \ref{limitsandnt} for
representable ends are established:

\begin{proposition}  Let $X=(X,\phi, \varepsilon(X))$ be an exterior discrete semi-flow
and consider the restrictions of the natural transformations $R_X, \eta_X$ to the subsets of $\omega$-representable ends.
\begin{itemize}
\item[(i)]  In the following diagram
$$\xymatrix{{^\omega\pi}_0^{\rm S}(X)\ar[rr]^{{^\omega R}_X}  & & {^\omega\pi}_0^{\BG}(X) \ar@<1ex>[r]^{\Id} \ar@<-1ex>[r]_{{^\omega S}} & {^\omega\pi}_0^{\BG}(X),
}$$
the image of $^\omega R$ is the equalizer of $\Id$ and $ {^\omega S}$.

\item[(ii)] If $X$ is first-countable at infinity, the following diagram is commutative
$$\xymatrix{{^\omega \pi_0}^{\rm S}(X)\ar[rr]^{{^\omega R}_X} \ar@{>>}[rd]_{{^\omega\eta}_X} & &{^\omega \pi}_0^{\BG}(X)  \\
&{^\omega\F}(X) \ar@{>->}[ru]_{{^\omega\theta}_X}}$$
where $ {^\omega R}_X= {^\omega\theta}_X\circ{^\omega\eta}_X $ is a canonical
factorization of $ {^\omega R}_X$ as the composition of an epimorphism and a monomorphism.
In this case, ${^\omega\F}(X)$ is the equalizer of  $\Id$ and $ {^\omega S}$.

\end{itemize}
\end{proposition}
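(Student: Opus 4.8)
The strategy is to observe that every construction appearing in the statement is a \emph{restriction}, along the map $\omega$ (resp. $\eta_X$, $R_X$), of the corresponding construction for the ambient exterior space $X$; hence the commutativity in (ii) and the factorization $ {^\omega R}_X= {^\omega\theta}_X\circ{^\omega\eta}_X $ are inherited directly from Proposition \ref{pipi} by restriction, and the only genuine work is to check that the image subsets match up correctly, i.e.\ that the equalizer statements survive the passage to $\omega$-representable ends. Concretely, I would first record the basic compatibility: for $x\in D^S(X)$ with witness $[\alpha]\in\pi_0^{\rm S}(X)$, one has $[\alpha|_\N]=[\phi_x]=\omega(x)$, so $R_X$ carries $\omega$-representable $\pi_0^{\rm S}$-ends to $\omega$-representable $\pi_0^{\BG}$-ends; and likewise $\eta_X$ carries $D^S(X)$-witnessed ends to $\check\omega$-representable ends of $\F(X)$, using the definition of $\check D(X)$ and $D^S(X)$ in Definition \ref{representable}. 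This shows $ {^\omega R}_X$, $ {^\omega\eta}_X$, $ {^\omega\theta}_X$ are well defined with the claimed sources and targets.

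\textbf{Part (i).} Here I would argue that the image of $ {^\omega R}_X$ is exactly $ {^\omega\pi}_0^{\BG}(X)\cap\{a\mid {^\omega S}(a)=a\}$. The inclusion ``$\subseteq$'' is immediate from the corresponding fact for $R_X$ in the diagram preceding Proposition \ref{pipi} together with the observation that $ {^\omega S}$ is the restriction of $S$. For ``$\supseteq$'', take an $\omega$-representable end $a=\omega(x)=[\phi_x]$ with $x\in D(X)$ and $S(a)=a$; the point is that $a$ lies in the image of $R_X$, so $a=[\beta|_\N]$ for some exterior $\beta\colon\R_+\to X$, and one must check $x$ can be taken in $D^S(X)$ — but this holds tautologically, since $x\in D(X)$ already witnesses $[\phi_x]=a=[\beta|_\N]$, which is precisely the defining condition of $D^S(X)$. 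Thus $a\in {^\omega\pi}_0^{\rm S}(X)$ maps onto $a$.

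\textbf{Part (ii).} For first-countable-at-infinity $X$, commutativity of the triangle is the restriction of the commutative triangle in Proposition \ref{pipi}, and the epi/mono factorization persists because a restriction of a surjection onto its image is still surjective, and a restriction of an injection is still injective; one checks the target of $ {^\omega\eta}_X$ is indeed all of $ {^\omega\F}(X)$ by unwinding: an end of $\F(X)$ is $\check\omega$-representable iff it equals $\eta_X([\alpha])$ for some $[\alpha]\in\pi_0^{\rm S}(X)$ with an appropriate $D^S(X)$-witness, which is exactly being in the image of $ {^\omega\eta}_X$. Finally, to conclude that $ {^\omega\F}(X)$ is the equalizer of $\Id$ and $ {^\omega S}$, I would combine part (i) with the just-established factorization: the image of $ {^\omega R}_X$ is the equalizer, and $ {^\omega\theta}_X$ identifies $ {^\omega\F}(X)$ with that image, since $ {^\omega\theta}_X$ is injective (restriction of the injective $\theta_X$) with image equal to $\mathrm{im}( {^\omega R}_X)$.

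\textbf{Main obstacle.} The one step that is not purely formal is verifying that the witnessing sets $\check D(X)$ and $D^S(X)$ interact correctly with the homotopies used in Proposition \ref{pipi} — specifically, that when $S([\phi_x])=[\phi_x]$ one can realize the resulting ray $\beta\colon\R_+\to X$ with $\beta(E(r),\,\cdot\,)$ tracking path components in a way compatible with $x\in\check D(X)$, so that the $\check\omega$-representing datum for $\eta_X$ is genuinely produced by the \emph{same} point $x$. I expect this to reduce to the homotopy $F\colon\N\bar\times I\to X$ constructed in the proof of Proposition \ref{pipi} staying inside the relevant path components $C_E$, which follows once one notes the $F_i$ there may be chosen within $C_i$; everything else is bookkeeping about restrictions of maps already analyzed in Section \ref{limitsandnt}.
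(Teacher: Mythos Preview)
Your proposal is correct and matches the paper's approach: the paper gives no explicit proof for this proposition, merely introducing it as ``the analogues of the results of section~\ref{limitsandnt} for representable ends,'' and your argument carries out precisely that restriction from Proposition~\ref{pipi} and the preceding equalizer diagram. The one place you flag as an obstacle---checking that a ray $\beta$ with $[\beta|_\N]=[\phi_x]$ lands in $^{\omega}\pi_0^{\rm S}(X)$---is indeed resolved by the observation you outline: the exterior homotopy $[\phi_x]=[\beta|_\N]$ forces $\phi^n(x)$ to lie, for large $n$, in the same path component of each $E$ as $\beta(n)$, so $x$ itself $\check\omega$-represents $\eta_X([\beta])$.
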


\medskip

Now we study some relations between the different subsets of pseudo-attraction associated to an externology given in Definition \ref{representable} and the different types of representable end points.

\begin{proposition} \label{shift} Suppose that $X=(X,\phi, \varepsilon(X))$ is an exterior discrete semi-flow. Then:
\begin{itemize}
\item[(i)] There are natural inclusions
$D^S(X) \subset \check D(X) \subset D(X)$, $\check D(X)$ is completely invariant and $D^S(X)$ is right-invariant.
\item[(ii)] The maps $\omega  \colon D(X) \to \pi_0^{\BG}(X)$, $\check\omega  \colon \check D(X) \to \check\pi_0(X)$ are
discrete semi-flow morphisms (between discrete semi-flow sets) which induce discrete
semi-flow epimorphisms  $\omega  \colon D(X) \to
{^{\omega}\pi_0^{\BG}}(X)$, $\check\omega  \colon \check D(X) \to
{^{\omega}\F}(X)$.
\item[(iii)] If $\pi_0(\phi^1)$ is injective, then ${^{\omega}\pi_0^{\BG}}(\phi^1)$ is injective.
If, in addition, $\phi^1$ is surjective, then
${^{\omega}\pi_0^{\BG}}(X)$  has the structure of a discrete flow
set.
\item[(iv)] The action induced on ${^{\omega}\F}(X)$ is trivial, that is, ${^{\omega}\F}(X) \subset \mathrm{Fix}(\F(X))$.
 \item [(v)] $S\,  \pi_0^{\BG}(\phi^1)= \pi_0^{\BG}(\phi^1)\, S$,
 ${^{\omega}\pi_0^{\BG}}(X)\subset \{a\in \pi_0^{\BG}(X)\mid S(a)=\pi_0^{\BG}(\phi^1)(a)\}$.
\end{itemize}
\end{proposition}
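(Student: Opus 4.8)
The plan is to prove the five items more or less in the order stated, since several of the later items build on the earlier ones. For (i), I would first recall that $D^S(X)$ consists of those $x\in D(X)$ whose Brown--Grossman end $[\phi_x]$ lifts to a Steenrod end, while $\check D(X)$ consists of those $x$ whose orbit eventually lies in the component-tube $C_E$ of some $\check\pi_0$-end $a$. The inclusion $\check D(X)\subset D(X)$ is immediate from the definition (eventual membership in $C_E\subset E$ forces eventual membership in $E$); the inclusion $D^S(X)\subset\check D(X)$ should follow by associating to the Steenrod representative $\alpha\colon\R_+\to X$ its $\check\pi_0$-end $\check\pi_0(\alpha)(+\infty)$ and checking that the orbit of $x$, being homotopic in $\N\bar\times I$ to $\alpha|_\N$, is eventually trapped in the corresponding component tube. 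Complete invariance of $\check D(X)$ is proved exactly as for $D(X)$ in the earlier proposition: using $\phi_{\phi^1(x)}=\phi_x s$ one sees that the ``eventually in $C_E$'' property is preserved under applying $\phi^1$ (right-invariance) and, conversely, if $\phi^1(x)$ is eventually in $C_E$ then shifting the index by one shows $x$ is too (left-invariance). Right-invariance of $D^S(X)$ is the same shift argument on the Steenrod side; left-invariance can fail because precomposing a ray with a ``shift by $-1$'' is not available, so I would only claim right-invariance here.

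For (ii), that $\omega$ and $\check\omega$ are discrete semi-flow morphisms amounts to the identity $\omega(\phi^1(x))=\pi_0^{\BG}(\phi^1)(\omega(x))$, which follows from $\phi_{\phi^1(x)}=\phi_x s$ together with the fact that $S([\alpha])$ and $\pi_0^{\BG}(\phi^1)([\alpha])$ need not coincide in general but do so after the observations in (v) --- actually the cleanest route is to verify $\phi^1\circ\phi_x=\phi_{\phi^1(x)}$ directly as maps $\N\to X$ and apply the functor $\pi_0^{\BG}$. Surjectivity onto $^{\omega}\pi_0^{\BG}(X)$ and $^{\omega}\F(X)$ is true by definition of these images, so ``epimorphism'' just restates that the codomain was defined as the image; the content is that these images are sub-semi-flows, i.e.\ closed under the induced action, which is precisely the morphism property just established.

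Item (iii) is a diagram-chase: if $\pi_0(\phi^1)\colon\pi_0(X)\to\pi_0(X)$ is injective then so is each $\pi_0(\phi^1)\colon\pi_0(E)\to\pi_0(f^{-1}E)$ at the level of the inverse system, hence $\pi_0^{\BG}(\phi^1)$ is injective; restricting to the invariant subset $^{\omega}\pi_0^{\BG}(X)$ keeps injectivity. If moreover $\phi^1$ is surjective, one checks that $^{\omega}\pi_0^{\BG}(\phi^1)$ is also surjective (every $\omega$-representable end is the image of one, using surjectivity to pull back a representing point), so the action is by bijections, giving a discrete flow set. For (iv), I would show $\check\omega(x)$ is a fixed point of $\F(\phi^1)$: if $x\in\check D(X)$ with associated end $a$, then both $x$ and $\phi^1(x)$ have orbits eventually in $C_E$ for every $E$, and since $\phi^1(x)\in C_E$ forces $\F(\phi^1)(a)$ to have component $C_E$ in $E$ as well, uniqueness of the end determined by an eventual tube gives $\F(\phi^1)(a)=a$; hence $^{\omega}\F(X)\subset\Fix(\F(X))$. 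Finally (v): the identity $S\,\pi_0^{\BG}(\phi^1)=\pi_0^{\BG}(\phi^1)\,S$ is the statement that the shift $s\colon\N\to\N$ and $\phi^1\colon X\to X$ commute with each other's induced maps on $\pi{\bf E}(\N,X)$, which holds because $(\phi^1\circ\alpha)\circ s=\phi^1\circ(\alpha\circ s)$ as exterior maps; and the inclusion $^{\omega}\pi_0^{\BG}(X)\subset\{a\mid S(a)=\pi_0^{\BG}(\phi^1)(a)\}$ follows because an $\omega$-representable $a=[\phi_x]$ satisfies $S(a)=[\phi_x s]=[\phi_{\phi^1(x)}]=\pi_0^{\BG}(\phi^1)(a)$, using $\phi_{\phi^1(x)}=\phi_x s$ once more.

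The main obstacle I anticipate is item (iv) and the uniqueness claims underlying $\check\omega$: one must be careful that an end of $\F(X)=\lim_E\pi_0(E)$ is genuinely determined by the data ``eventually in $C_E$ for each $E$'' --- this requires the compatibility $Y^E_{E'}(C_E)\subset C_{E'}$ across the inverse system to be automatic from the orbit being eventually in both tubes, which is true but should be spelled out. The remaining items are essentially formal consequences of the identity $\phi_{\phi^1(x)}=\phi_x s$ and functoriality of $\pi_0$, $\pi_0^{\BG}$, $\F$, together with the already-established structural properties of $D(X)$.
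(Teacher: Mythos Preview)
Your treatment of (i), (ii), (iv), and (v) is essentially correct and tracks the paper's own proof, which is terser (it dismisses (i) as ``straightforward'' and argues (iv) simply from $\check\omega(\phi^1(x))=\check\omega(x)$). The key identity $\phi^1\circ\phi_x=\phi_{\phi^1(x)}=\phi_x\, s$ that you isolate is exactly what drives (ii) and (v) in the paper as well. Your surjectivity argument in (iii) also matches the paper.

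The genuine gap is your injectivity argument in (iii). You claim that injectivity of $\pi_0(\phi^1)$ on $\pi_0(X)$ forces injectivity of maps ``$\pi_0(\phi^1)\colon\pi_0(E)\to\pi_0(f^{-1}E)$ at the level of the inverse system, hence $\pi_0^{\BG}(\phi^1)$ is injective.'' Two things fail here. First, $\pi_0^{\BG}(X)=\pi{\bf E}(\N,X)$ is \emph{not} the inverse limit $\lim_E\pi_0(E)$; that limit is $\check\pi_0(X)$, and the two invariants differ in general, so no inverse-system chase computes $\pi_0^{\BG}$. Second, even for $\check\pi_0$ the implication is false: the relevant map runs $\pi_0((\phi^1)^{-1}E)\to\pi_0(E)$ (not the direction you wrote), and two components of $(\phi^1)^{-1}E$ may map to the same component of $E$ while being joined only by a path that leaves $(\phi^1)^{-1}E$---injectivity on $\pi_0(X)$ says nothing about this.

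The paper's argument is a direct homotopy construction that uses the hypothesis exactly once. Suppose $[\phi^1\phi_x]=[\phi^1\phi_{x'}]$ via an exterior homotopy $H\colon\N\bar\times I\to X$. Evaluating at $n=0$ gives a path from $\phi^1(x)$ to $\phi^1(x')$; injectivity of $\pi_0(\phi^1)$ then produces a path $\gamma$ in $X$ from $x$ to $x'$. Since $\phi^1\phi_x=\phi_x s$, the homotopy $H$ is between $\phi_x s$ and $\phi_{x'}s$, and one defines $G\colon\N\bar\times I\to X$ by $G(0,t)=\gamma(t)$ and $G(n,t)=H(n-1,t)$ for $n\ge 1$; this is an exterior homotopy from $\phi_x$ to $\phi_{x'}$, proving $[\phi_x]=[\phi_{x'}]$.
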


\begin{proof}
\begin{enumerate}
\item[(i)] It is straightforward to check.

\item[(ii)] Given $x\in D(X) $, we have
$\phi_{\phi^1(x)}(n)=\phi^1 \phi_{x}(n)$. Then,
$\omega(\phi^1(x))=[\phi_{\phi^1(x)}]=[\phi^1\phi_{x}]=\pi_0^{\BG}(\phi^1)[\phi_{x}]= \pi_0^{\BG}(\phi^1) \omega (x)$.
There is a similar proof for $\check \omega$.

\item[(iii)]
Suppose that
$\pi_0^{\BG}(\phi^1)[\phi_{x}]=\pi_0^{\BG}(\phi^1)[\phi_{x'}]$.
Then, we have that $\phi^1(x)$ is in the same path component that
$\phi^1(x')$. Taking into account that $\pi_0(\phi^1)$ is
injective, we have that there is a continuous path from $x$ to
$x'$. Using this path and an exterior homotopy from
$\phi^1_{\phi^1({x})}$ to $\phi^1_{\phi^1({x'})}$, we can
construct a new exterior homotopy from $\phi_{x}$ to
$\phi_{x'}$.

Now suppose that  $\phi^1$ is surjective. Then, given $y\in D(X)$, we can take $x\in X$ such that $\phi^1(x)=y$.
Then, $x\in D(X)$ and one has that $\pi_0^{\BG}(\phi^1)[\phi_x]=[\phi_y]$. This implies that $\pi_0^{\BG}(\phi^1)$ is surjective.

\item[(iv)] Just considering that $\check \omega  (\phi^1(x))= \check \omega  (x)$, $x \in \check D(X)$, it follows that the action is trivial.

\item[(v)] If $[\alpha] \in \pi_0^{\BG}(X)$, then
$S \pi_0^{\BG}(\phi^1)[\alpha] =[(\phi^1 \alpha) s]=[\phi^1(
\alpha s) ]=\pi_0^{\BG}(\phi^1) S [\alpha]$. Besides, if $x\in
D(X)$, then $S\omega(x)=
S[\phi_{x}]=[\phi^1\phi_{x}]=\pi_0^{\BG}(\phi^1)\omega(x)$.
\end{enumerate}
\end{proof}

Now we introduce the basins of  end points and we consider special  end points whose basins are open subset of $X$. Some local stability notions with respect to end points are
also considered.

\begin{definition} Given an exterior discrete semi-flow  $X=(X,\phi, \varepsilon(X))$, the subspace denoted by
 $$D_{a}=\omega^{-1}(a), \quad a \in {^{\omega}\pi_0^{\BG}}(X)$$
will be called the {\bf $\omega$-basin of  $a$}.

The subspace  $$\check D_{a}=\check \omega^{-1}(a), \quad a \in  {^{\omega}{\check \pi}}_0(X)$$
will be called the {\bf $\check\omega$-basin of  $a$}.

The subspace  $$D_{a}^S=D^S(X) \cap \check D_{\eta_X(a)}, \quad a
\in  {^{\omega}{\pi}}_0^S(X)$$ will be called the {\bf immediate
basin of  $a$}.
\end{definition}

\begin{definition} Let $X$ be an exterior discrete semi-flow.
\begin{itemize}
\item An end point $ a \in  {^{\omega}\pi_0^{\BG}}(X)$ is said to be an {\bf $\omega$-attractor} if its basin
$D_a$ is an open subset of $X$. Denote
${^{\omega}_A\pi_0^{\BG}}(X)=\{a\in{^{\omega}\pi_0^{\BG}}(X)\mid a
\mbox{ is an $\omega$-attractor}\}$ and
${^{\omega}_{N}\pi_0^{\BG}}(X)={^{\omega}\pi_0^{\BG}}(X) \setminus
{^{\omega}_A\pi_0^{\BG}}(X)$.

\item  An end point $ a \in  {^{\omega}{\check \pi}}_0(X)$ is said to be an {\bf $\check\omega$-attractor} if its basin
$ \check D_a$ is an open subset of $X$.
Denote ${^{\omega}_A{\check \pi}}_0(X)=\{a \in  {^{\omega}{\check \pi}}_0(X)\mid a \mbox{ is a $\check\omega$-attractor}\}$ and
 ${^{\omega}_N{\check \pi}}_0(X)= {^{\omega}{\check \pi}}_0(X) \setminus {^{\omega}_A{\check \pi}}_0(X)$.

\item A point $x\in D(X)$ is {\bf locally  $\omega$-stable} if there is an open neighborhood $U$ such that
$U\subset D(X)$ and, for every $y \in U$, $\omega(y)=\omega(x)$.

\item A point $x\in \check D(X)$ is {\bf locally  $\check\omega$-stable} if there is an open neighborhood $U$
such that $U\subset \check D(X)$ and,  for every $y \in U$,
$\check \omega(y)= \check \omega(x)$.
\end{itemize}
\end{definition}

  We have the following basic properties which are immediately deduced from the definitions.

\begin{lemma}  Let $X=(X,\phi, \varepsilon(X))$ be an exterior discrete semi-flow and let $x\in D(X)$.
The following statements hold:
\begin{itemize}
\item[(i)] If the end $\omega(x)$ is an $\omega$-attractor, then $x$ is locally $\omega$-stable.
\item[(ii)] If $a\in {^{\omega}\pi_0^{\BG}}(X)$ and $x\in D_a$, $x$ is locally $\omega$-stable
if and only if $x \in \mathring{D}_a$. If, for every $x \in D_a$,
$x$ is locally $\omega$-stable, then $a$ is an $\omega$-attractor.
\item[(iii)] If the end $\check \omega(x)$ is an $\check \omega$-attractor, then $x$ is locally $\check \omega$-stable.
\item[(iv)] If $a \in  {^{\omega}{\check \pi}}_0(X)$ and $x\in \check D_a$, $x$ is locally
$\check\omega$-stable if and only if  $x \in \mathring{\check
D}_a$. If, for every $x \in \check D_a$, $x$ is locally
$\check\omega$-stable, then  $a$ is a $\check\omega$-attractor.
\end{itemize}
\end{lemma}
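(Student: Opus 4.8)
The statement collects four assertions that are, as the authors say, ``immediately deduced from the definitions''; the proof plan is therefore to unwind each definition carefully and assemble short elementary arguments, with parts (iii)--(iv) being verbatim analogues of (i)--(ii) once $\omega$, $D(X)$ are replaced by $\check\omega$, $\check D(X)$. I will present (i) and (ii) in detail and indicate that (iii) and (iv) follow mutatis mutandis. The only point requiring a little care is the ``only if'' direction in part (ii), which is where the interior operation $\mathring{D}_a$ must be handled precisely; everything else is a direct translation of the words ``locally $\omega$-stable'' and ``$\omega$-attractor''.

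\textbf{Proof of (i).} Suppose the end $\omega(x)$ is an $\omega$-attractor. By definition this means its $\omega$-basin $D_{\omega(x)}=\omega^{-1}(\omega(x))$ is an open subset of $X$. Now $x\in D_{\omega(x)}$ by construction, and for every $y\in D_{\omega(x)}$ we have $\omega(y)=\omega(x)$ and in particular $y\in D(X)$ (since $\omega$ is only defined on $D(X)$, so $D_{\omega(x)}\subset D(X)$). Thus $U:=D_{\omega(x)}$ is an open neighborhood of $x$ contained in $D(X)$ on which $\omega$ is constantly equal to $\omega(x)$, which is exactly the definition of $x$ being locally $\omega$-stable.

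\textbf{Proof of (ii).} Fix $a\in{^{\omega}\pi_0^{\BG}}(X)$ and $x\in D_a$, so $\omega(x)=a$. For the ``if'' direction, suppose $x\in\mathring{D}_a$; then $U:=\mathring{D}_a$ is an open neighborhood of $x$, it is contained in $D_a\subset D(X)$, and every $y\in U$ satisfies $\omega(y)=a=\omega(x)$, so $x$ is locally $\omega$-stable. For the ``only if'' direction, suppose $x$ is locally $\omega$-stable, witnessed by an open neighborhood $U$ of $x$ with $U\subset D(X)$ and $\omega(y)=\omega(x)=a$ for all $y\in U$; then $U\subset\omega^{-1}(a)=D_a$, and since $U$ is open and $x\in U$ we get $x\in\mathring{D}_a$. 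Finally, if every $x\in D_a$ is locally $\omega$-stable, then by the equivalence just proved $D_a=\mathring{D}_a$, so $D_a$ is open; since $D_a=\omega^{-1}(a)$ is nonempty (as $a$ is $\omega$-representable, there is $x\in D(X)$ with $\omega(x)=a$), $D_a$ is an open $\omega$-basin, i.e. $a$ is an $\omega$-attractor.

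\textbf{Proof of (iii) and (iv).} These are proved exactly as (i) and (ii), replacing $\omega$ by $\check\omega\colon\check D(X)\to\check\pi_0(X)$, $D(X)$ by $\check D(X)$, $D_a$ by $\check D_a=\check\omega^{-1}(a)$, ``locally $\omega$-stable'' by ``locally $\check\omega$-stable'' and ``$\omega$-attractor'' by ``$\check\omega$-attractor''; the nonemptiness of $\check D_a$ for $a\in{^{\omega}\check\pi_0}(X)$ used in the last step of (iv) is the definition of $\check\omega$-representability. No obstacle of substance arises; the only thing to be careful about throughout is to keep the ambient domain of the relevant map ($D(X)$ resp. $\check D(X)$) in the picture, since $\omega$-basins are by definition subsets of $D(X)$, which is what makes the neighborhood $U$ in each local-stability argument automatically land in the right set.
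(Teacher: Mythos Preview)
Your proof is correct and matches the paper's treatment: the paper gives no proof at all, stating only that these properties ``are immediately deduced from the definitions,'' which is precisely what you do by unwinding the meanings of $\omega$-attractor, locally $\omega$-stable, and $\mathring{D}_a$.
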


The maps   $\omega, \check \omega$ permit us  to divide an exterior discrete semi-flow.

\begin{corollary} Let $X=(X,\phi, \varepsilon(X))$ be an exterior discrete semi-flow
and denote $D=D(X)$, $\check D=\check D(X)$, $D^S=D^S(X)$. Then, we have
the following induced partitions of $X$:
    $$X=(X\setminus D)\sqcup \left ( \bigsqcup_{a \in {^{\omega}\pi_0^{\BG}}(X)}D_{a}\right),$$
     $$X=(X\setminus \check D)\sqcup \left ( \bigsqcup_{a \in {^{\omega}{\check \pi}}_0(X)} \check D_{a}\right),$$
        $$X=(X\setminus D^S)\sqcup \left ( \bigsqcup_{a \in {^{\omega}\pi}^S_0(X)} D^S_{a}\right),$$
where $X\setminus D$,  $X\setminus \check D$ and $\check D_a$  are completely invariant and  $D^S_{a}$ is right-invariant.
\end{corollary}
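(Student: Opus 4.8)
The plan is to present all three partitions as fibre decompositions: $D_a$ and $\check D_a$ are the fibres of $\omega$ and $\check\omega$, while $D^S_a$ is a fibre of $\check\omega$ pulled back along $\eta_X$; the invariance statements are then read off from Proposition~\ref{shift} together with the already established complete invariance of $D(X)$ and $\check D(X)$.

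First I would dispose of the first two partitions. By Definition~\ref{representable} --- equivalently, by Proposition~\ref{shift}(ii) --- the set ${}^\omega\pi_0^{\BG}(X)$ is exactly the image of $\omega\colon D(X)\to\pi_0^{\BG}(X)$ and ${}^\omega\check\pi_0(X)$ is exactly the image of $\check\omega\colon\check D(X)\to\check\pi_0(X)$. Hence $\{D_a=\omega^{-1}(a)\}_{a\in{}^\omega\pi_0^{\BG}(X)}$ is a partition of $D(X)$ into nonempty pieces, and adjoining the complement $X\setminus D(X)$ gives the first displayed partition of $X$; the second is obtained verbatim with $\omega,D$ replaced by $\check\omega,\check D$. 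For the third partition I would first recall that $D^S(X)\subset\check D(X)$ (Proposition~\ref{shift}(i)), so $\check\omega$ is defined on $D^S(X)$ and the sets $\check D_b\cap D^S(X)$, $b\in{}^\omega\check\pi_0(X)$, partition $D^S(X)$; it then suffices to show that the nonempty ones among them are precisely the sets $D^S_a=D^S(X)\cap\check D_{\eta_X(a)}$ with $a\in{}^\omega\pi_0^S(X)$. This reduces to the following claim: if $x\in D^S(X)$ and $a=[\alpha]\in\pi_0^S(X)$ is a representative with $[\alpha|_\N]=[\phi_x]$, then $\check\omega(x)=\eta_X(a)$.

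To establish the claim I would fix an exterior homotopy $H\colon\N\bar\times I\to X$ from $\phi_x$ to $\alpha|_\N$. Given $E\in\varepsilon(X)$, the map $H$ being exterior gives $H^{-1}(E)\in\varepsilon(\N\bar\times I)$; by the definition of the externology on $\N\bar\times I$, for each $t\in I$ there are an open neighbourhood $U_t$ of $t$ and a cofinite $T^t\subset\N$ with $T^t\times U_t\subset H^{-1}(E)$, and compactness of $I$ yields a finite subcover and a single cofinite $T\subset\N$ with $T\times I\subset H^{-1}(E)$. Thus $H([n_E,\infty)\times I)\subset E$ for a suitable $n_E\in\N$, so $\phi^n(x)$ and $\alpha(n)$ are joined by a path inside $E$, hence lie in the same path component of $E$, for all $n\geq n_E$. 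On the other hand $\alpha^{-1}(E)$ is co-compact in $\R_+$, so $\alpha$ sends a terminal ray into a single path component $C_E$ of $E$, and $\eta_X([\alpha])=\check\pi_0(\alpha)(+\infty)$ is represented by the system $(C_E)_E$; likewise, for $n$ large $\phi^n(x)$ lies in the path component of $E$ representing $\check\omega(x)$. Comparing these facts for all sufficiently large $n$, the systems of components representing $\check\omega(x)$ and $\eta_X([\alpha])$ agree over every $E\in\varepsilon(X)$, hence coincide in $\check\pi_0(X)=\lim_{E\in\varepsilon(X)}\pi_0(E)$; this is the claim. In particular $\eta_X(a)\in{}^\omega\check\pi_0(X)$, so $a\in{}^\omega\pi_0^S(X)$ and $x\in D^S_a$; and since $D^S_a$ depends on $a$ only through $\eta_X(a)$, the distinct nonempty sets $D^S_a$ are pairwise disjoint and exhaust $D^S(X)$. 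Adjoining $X\setminus D^S(X)$ gives the third partition.

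Finally I would read off the invariance statements. Since $D(X)$ is completely invariant (by the proposition establishing complete invariance of $D(X)$) and $\check D(X)$ is completely invariant (Proposition~\ref{shift}(i)), so are their complements $X\setminus D(X)$ and $X\setminus\check D(X)$, because the left- and right-invariance of a complement $X\setminus A$ are exactly the right- and left-invariance of $A$. For $\check D_a=\check\omega^{-1}(a)$: by Proposition~\ref{shift}(ii) and (iv), $\check\omega$ is a semi-flow morphism whose induced action on ${}^\omega\check\pi_0(X)$ is trivial, i.e. $\check\omega(\phi^1(x))=\check\omega(x)$ for every $x\in\check D(X)$; together with the complete invariance of $\check D(X)$ this yields $x\in\check D_a\iff\phi^1(x)\in\check D_a$ whenever $x$ and $\phi^1(x)$ both lie in $\check D(X)$, i.e. $\check D_a$ is completely invariant. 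And $D^S_a=D^S(X)\cap\check D_{\eta_X(a)}$ is the intersection of the right-invariant set $D^S(X)$ (Proposition~\ref{shift}(i)) with the completely invariant (hence right-invariant) set $\check D_{\eta_X(a)}$, so it is right-invariant. I expect the main obstacle to be the claim $\check\omega(x)=\eta_X(a)$: it is the one step that genuinely uses the exterior-space structure --- through the externology of $\N\bar\times I$ and the compactness of $I$ --- whereas everything else is formal manipulation of fibres and invariance already packaged in Proposition~\ref{shift}.
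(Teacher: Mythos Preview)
Your proof is correct. The paper states this corollary without proof, treating it as an immediate consequence of the definitions of the basins and of the maps $\omega,\check\omega$ together with Proposition~\ref{shift}; your argument is precisely the unfolding of that implicit reasoning. The one genuinely substantive step you supply is the claim $\check\omega(x)=\eta_X(a)$ for $x\in D^S(X)$ with $[\alpha|_{\N}]=[\phi_x]$: this is exactly the content behind the inclusion $D^S(X)\subset\check D(X)$ that Proposition~\ref{shift}(i) dismisses as ``straightforward to check'', and your use of the externology of $\N\bar\times I$ together with compactness of $I$ is the right way to extract it. One small remark on the third decomposition: as you observe, $D^S_a$ depends only on $\eta_X(a)$, so the indexing by $a\in{}^{\omega}\pi_0^S(X)$ may repeat pieces when $\eta_X$ fails to be injective there; the paper's formulation glosses over this, and your phrasing ``the distinct nonempty sets $D^S_a$ are pairwise disjoint'' is the honest way to read the statement.
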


In general $D_a$ is not right invariant.
However, some notable unions of these basins are completely invariant subsets:

\begin{proposition} Let $X=(X,\phi, \varepsilon(X))$ be an exterior discrete semi-flow. Then:
\begin{itemize}
\item[(i)]
If $a \in {^{\omega}\pi_0^{\BG}}(X)$ and $[a]$ is  the completely invariant big orbit  of
$a$ in ${^{\omega}\pi_0^{\BG}}(X)$, then $\bigsqcup_{a' \in [a]}D_{a'}$ is completely invariant.
\item[(ii)] If $\pi_0(\phi^1)$ is injective and $\phi^1 \colon X \to X$ is a surjective open map,
then ${_A^{\omega}\pi_0^{\BG}}(X)$ is completely invariant in ${^{\omega}\pi_0^{\BG}}(X)$,
 $\bigsqcup_{a \in {_A^{\omega}\pi_0^{\BG}}(X)}D_{a}$ is an open completely invariant subset of
 $X$ and $\bigsqcup_{a \in {_N^{\omega}\pi_0^{\BG}}(X)}D_{a}$ is completely invariant and closed in $D(X)$.
 \item[(iii)] If $X$ is first-countable at infinity, then the following diagram is commutative
$$\xymatrix{\check D(X) \ar[r]^{\check \omega} \ar[d] & {^{\omega}\F}(X)\ar[d]^{\theta_X}\\
D(X)\ar[r]^{\\\omega}  & {^{\omega}\pi_0^{\BG}}(X)}$$

\begin{proof} A simple inspection proves (i) and (iii).

It only remains to check  (ii). By Proposition \ref{shift}
(ii)-(iii), we have a commutative diagram

$$\xymatrix{D(X)\ar[r]^\omega\ar[d]^{\phi^1} & {^\omega}\pi_0^\BG(X)\ar[d]^{{^\omega}\pi_0^\BG(\phi^1)}\\
D(X)\ar[r]_\omega & {^\omega}\pi_0^\BG(X)}$$
   where ${^{\omega}\pi_0^{\BG}}(\phi^1)$ is  bijective. This implies that
   $\omega^{-1}({^{\omega}\pi_0^{\BG}}(\phi^1)(a))=\phi^1 (\omega^{-1}(a))$, for $a \in {^{\omega}\pi_0^{\BG}}(X)$.
   Since $\phi^1$ is a continuous open  map, it follows that ${_A^{\omega}\pi_0^{\BG}}(X)$ is completely invariant
   and also the rest of the assertion.
    \end{proof}

 \end{itemize}
\end{proposition}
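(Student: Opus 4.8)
The plan is to treat the three items separately, since (i) and (iii) are essentially diagram-chasing from the definitions while (ii) is the one requiring a genuine argument. First I would dispose of (i): given $a \in {^{\omega}\pi_0^{\BG}}(X)$, write $[a]$ for its big orbit in the discrete semi-flow ${^{\omega}\pi_0^{\BG}}(X)$, which is completely invariant by the discussion preceding Definition \ref{representable} (the big orbit of any point under a semi-flow action is completely invariant, as noted in the Preliminaries). Since $\omega \colon D(X) \to {^{\omega}\pi_0^{\BG}}(X)$ is a discrete semi-flow morphism by Proposition \ref{shift}(ii), and $D(X)$ is completely invariant in $X$, the preimage $\omega^{-1}([a]) = \bigsqcup_{a' \in [a]} D_{a'}$ is completely invariant: right-invariance follows from $\omega \phi^1 = \pi_0^{\BG}(\phi^1)\omega$ together with $\pi_0^{\BG}(\phi^1)([a]) \subset [a]$, and left-invariance is the analogous statement for preimages using that $[a]$ is also left-invariant and $D(X)$ is left-invariant. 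For (iii), under the first-countable-at-infinity hypothesis the map $\theta_X$ is defined (Proposition \ref{pipi}); I would simply unwind the definitions of $\check\omega$ and $\omega$: for $x \in \check D(X)$ with $\check\omega(x) = a$ represented by a chain of path components $C_{E_0} \supset C_{E_1} \supset \cdots$, eventually $\phi^n(x) \in C_{E_i}$ for each $i$, so the Brown-Grossman ray $\phi_x$ eventually lands in each $C_{E_i}$; this is exactly the recipe for $\theta_X(a)$, giving $\omega(x) = \theta_X(\check\omega(x))$.

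The substance is in (ii), and this is where I expect the main obstacle. Assume $\pi_0(\phi^1)$ is injective and $\phi^1$ is a surjective open map. By Proposition \ref{shift}(iii), these hypotheses make ${^{\omega}\pi_0^{\BG}}(\phi^1)$ a bijection, so ${^{\omega}\pi_0^{\BG}}(X)$ carries a discrete \emph{flow} structure. The first key step is the identity
$$\omega^{-1}\bigl({^{\omega}\pi_0^{\BG}}(\phi^1)(a)\bigr) = \phi^1\bigl(\omega^{-1}(a)\bigr), \quad a \in {^{\omega}\pi_0^{\BG}}(X).$$
One inclusion ($\supset$) is just the commutativity $\omega\phi^1 = {^{\omega}\pi_0^{\BG}}(\phi^1)\omega$; for the reverse inclusion one uses surjectivity of $\phi^1$ (so every point of $D(X)$ has a preimage in $D(X)$, as in the proof of Proposition \ref{shift}(iii)) together with injectivity of ${^{\omega}\pi_0^{\BG}}(\phi^1)$ to pin down the fiber. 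Granting this identity, since $\phi^1$ is open, $\phi^1(D_a) = \phi^1(\omega^{-1}(a))$ is open whenever $D_a$ is open; since $\phi^1$ is continuous, $(\phi^1)^{-1}(D_a)$ is open whenever $D_a$ is open; combined with the bijectivity of ${^{\omega}\pi_0^{\BG}}(\phi^1)$ and the identity above, this shows that $a$ is an $\omega$-attractor if and only if ${^{\omega}\pi_0^{\BG}}(\phi^1)(a)$ is one if and only if its ${^{\omega}\pi_0^{\BG}}(\phi^1)$-preimage is one, i.e.\ ${_A^{\omega}\pi_0^{\BG}}(X)$ is completely invariant in ${^{\omega}\pi_0^{\BG}}(X)$.

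For the remaining two assertions in (ii): $\bigsqcup_{a \in {_A^{\omega}\pi_0^{\BG}}(X)} D_a = \omega^{-1}({_A^{\omega}\pi_0^{\BG}}(X))$ is a union of open basins, hence open in $X$; it is completely invariant by the same argument as in (i) applied to the completely invariant subset ${_A^{\omega}\pi_0^{\BG}}(X)$ of ${^{\omega}\pi_0^{\BG}}(X)$. Dually, $\bigsqcup_{a \in {_N^{\omega}\pi_0^{\BG}}(X)} D_a$ is its complement inside $D(X)$, hence closed in $D(X)$ and completely invariant (the complement in a completely invariant set of a completely invariant set is completely invariant, since ${_N^{\omega}\pi_0^{\BG}}(X)$ is the complement of a completely invariant set in the completely invariant set ${^{\omega}\pi_0^{\BG}}(X)$). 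The delicate point to get right is the fiber identity — specifically making sure openness of $\phi^1$ is actually used to pass from "$D_a$ open" to "$D_{\phi^1_\ast a}$ open" and that no hypothesis beyond those stated is needed; everything else is formal bookkeeping with preimages and complements of completely invariant sets.
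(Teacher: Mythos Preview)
Your proposal is correct and follows essentially the same route as the paper: parts (i) and (iii) are dismissed as routine unwinding of definitions, while part (ii) hinges on exactly the fiber identity $\omega^{-1}\bigl({^{\omega}\pi_0^{\BG}}(\phi^1)(a)\bigr)=\phi^1\bigl(\omega^{-1}(a)\bigr)$ obtained from the commutative square of Proposition~\ref{shift}(ii)--(iii) together with bijectivity of ${^{\omega}\pi_0^{\BG}}(\phi^1)$, after which openness and continuity of $\phi^1$ finish the job. Your write-up simply supplies the details the paper leaves implicit.
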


%

Using the subset of end points which are attractors and its complement,
one can divide, under the conditions of Proposition above,  the region of pseudo-attraction of an externology as the
union of a completely invariant open subset and its complement:

\begin{definition} Given an exterior discrete semi-flow $X=(X,\phi, \varepsilon(X))$,
$${_A D}(X)=\bigsqcup_{a \in {_A^{\omega}\pi_0^{\BG}}(X)}D_{a}, \quad
{_N D}(X)=\bigsqcup_{a \in {_N^{\omega}\pi_0^{\BG}}(X)}D_{a}$$
are   the {\bf ${^{\omega}\pi_0^{\BG}}$-attracting basin} and the
{\bf ${^{\omega}\pi_0^{\BG}}$-non-attracting basin} of $X$, respectively.
Similarly,
$${_A \check D}(X)=\bigsqcup_{a \in {_A^{\omega}{\check \pi}}_0(X)} \check D_{a}, \quad
{_N \check D}(X)=\bigsqcup_{a \in {_N^{\omega}{\check \pi}}_0(X)} \check D_{a}$$
 are   the {\bf ${^{\omega}\check \pi_0}$-attracting basin} and the {\bf ${^{\omega}\check \pi_0}$-non-attracting basin} of $X$.

The intersection with the limit sets
 $${_A L}(X)={_A D}(X) \cap L(X), \quad
 {_N L}(X)={_N D}(X) \cap L(X)$$
  $${_A \check L}(X)={_A \check D}(X) \cap L(X), \quad
 {_N \check L}(X)={_N \check  D}(X) \cap L(X)$$

are the corresponding {\bf attracting (non-attracting) limit} of $X$.

\end{definition}

\begin{remark}

In a similar way, we can consider
canonical maps
$${\bar\omega} \colon {\bar D}(X) \to {\bar\pi}_0^{\BG}(X),$$
and basins of the form $${\bar D}_{a}={\bar\omega}^{-1}(a), \quad a \in {^{\omega}{\bar \pi}_0^{\BG}}(X)$$
The new sets end points and bar-limits can be divided with the corresponding notions of attracting end points. The authors suggest  that subsets of the following type
$$
{_N \bar L}(X)={_N \bar D}(X) \cap \bar L(X)$$
$$
{_N \check { \bar L}}(X)={_N\check  {\bar D}}(X) \cap \bar L(X)$$
can be used to give pure topological  analogues of Julia  sets in a topological semi-flow.

\end{remark}

\section{The externology of right-absorbing open subsets}

For a given $X=(X,\phi)$   discrete semi-flow, we can consider the
externology $\varepsilon^{\r}(X)$ given by all the open subsets
$E$ such that, for every $x\in X$, there is $n\in \N$ satisfying
that, for $m\geq n$, $\phi^m(x) \in E$. For this externology, one
has that  $(X, \phi, \varepsilon^{\r}(X))$ is a
$\d$-exterior discrete semi-flow, then $D(X)=X$.

\begin{definition} For a given $(X,\phi)$ discrete semi-flow, the externology $\varepsilon^{\r}(X)$
is said to be the {\bf right-absorbing externology} of $(X, \phi)$
(or just the {\bf right externology}) and an open $E\in
\varepsilon^{\r}(X)$ is said to be an $\r$-exterior open subset of
$X$.
\end{definition}

In this section, we  consider the following basic properties.

\begin{lemma}\label{exteriorandD} Let $(X, \phi, \varepsilon(X))$ be an exterior discrete semi-flow.
Then, the following statements are equivalent:
\begin{itemize}
\item[(i)] $\varepsilon(X) \subset \varepsilon^{\r}(X)$.
\item[(ii)] $(X, \phi, \varepsilon(X))$ is a $\d$-exterior discrete semi-flow.
\item[(iii)] $D(X)=X$.
\end{itemize}
\end{lemma}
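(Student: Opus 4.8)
The statement asserts the equivalence of three conditions for an exterior discrete semi-flow $(X,\phi,\varepsilon(X))$: that every exterior open subset is right-absorbing, that the semi-flow is $\d$-exterior, and that $D(X)=X$. The plan is to prove the cycle $(i)\Rightarrow(ii)\Rightarrow(iii)\Rightarrow(i)$, unwinding the definitions in each case. The key observation tying everything together is that, by definition, $x\in D(X)$ exactly when $\phi_x\colon\N\to X$ is exterior, and that the co-compact externology on $\N$ has the sets $[n,+\infty)$ (complements of finite, hence compact, sets) as a cofinal family; thus $\phi_x$ is exterior precisely when, for every $E\in\varepsilon(X)$, some $[n,+\infty)$ maps into $E$, i.e. $\phi^m(x)\in E$ for all $m\geq n$. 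This is exactly the membership condition defining $\varepsilon^{\r}(X)$, read pointwise.

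\textbf{Step $(i)\Rightarrow(ii)$.} Assume $\varepsilon(X)\subset\varepsilon^{\r}(X)$. To show $(X,\phi,\varepsilon(X))$ is $\d$-exterior I must check $\phi_x\colon\N\to X$ is exterior for every $x\in X$. Fix $x$ and $E\in\varepsilon(X)$; by hypothesis $E\in\varepsilon^{\r}(X)$, so there is $n\in\N$ with $\phi^m(x)\in E$ for all $m\geq n$. Hence $\phi_x^{-1}(E)\supset[n,+\infty)$, which is a co-compact (hence exterior) open subset of $\N$; since $\phi_x^{-1}(E)$ is open and contains an exterior open subset, it lies in $\varepsilon^{\e}(\N)$. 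So $\phi_x$ is exterior and $(ii)$ holds.

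\textbf{Step $(ii)\Rightarrow(iii)$.} If the semi-flow is $\d$-exterior, then by definition $\phi_x$ is exterior for every $x\in X$, so every point of $X$ lies in $D(X)$, giving $D(X)=X$. \textbf{Step $(iii)\Rightarrow(i)$.} Assume $D(X)=X$ and let $E\in\varepsilon(X)$; I must show $E\in\varepsilon^{\r}(X)$, i.e. for every $x\in X$ there is $n$ with $\phi^m(x)\in E$ for $m\geq n$. Fix $x\in X=D(X)$, so $\phi_x$ is exterior; then $\phi_x^{-1}(E)$ is an exterior open subset of $\N$, hence its complement in $\N$ is compact, hence finite, hence bounded by some $n-1$. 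Therefore $[n,+\infty)\subset\phi_x^{-1}(E)$, which says exactly $\phi^m(x)\in E$ for all $m\geq n$. Thus $E\in\varepsilon^{\r}(X)$ and $(i)$ holds. There is no real obstacle here; the only point requiring a little care is the translation between ``exterior map on $\N$'' and the pointwise absorbing condition via cofinality of the sets $[n,+\infty)$ in the co-compact externology of $\N$, which is why I single out that identification at the start.
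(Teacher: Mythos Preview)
Your argument is correct. The paper states this lemma without proof, treating it as an immediate consequence of the definitions; your cycle $(i)\Rightarrow(ii)\Rightarrow(iii)\Rightarrow(i)$ is exactly the natural unwinding of those definitions and is what the authors presumably intend.
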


It is important to note that one has a canonical functor
$\bf{F}(\N) \to \bf{E^{\d}F}(\N)$ which carries $(X, \phi)$ to
$(X, \phi, \varepsilon^{\r}(X))$. We will use the reduced notation
$X=(X, \phi)$ and $X^{\r}= (X, \phi, \varepsilon^{\r}(X))$.  Using
this canonical construction  all the different
constructions (limits, end sets, etc.) given for exterior discrete
semi-flows can be applied to a discrete semi-flow.

In  next subsections, some relations between constructions associated to an
exterior space and dynamic properties of a discrete semi-flow are analyzed. In particular,
we see the relation between limits and periodic points, as well as between bar-limits and omega-limits.

\subsection{Periodic points}

The relation of the limit space of an exterior discrete semi-flow
and the sub-flow of periodic points is analyzed in the following
results.

\begin{lemma}\label{uno} If $X$ is a $\d$-exterior discrete semi-flow, then
$P(X)\subset L(X)$. In particular, if $X$ is a discrete semi-flow, then
$P(X)\subset L(X^{\r}).$
\end{lemma}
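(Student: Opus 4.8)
The plan is to show directly that every periodic point lies in every exterior open subset, using only the defining property of a $\d$-exterior discrete semi-flow — namely that $\phi_x \colon \N \to X$ is an exterior map for each $x$. Recall from Lemma \ref{exteriorandD} that $X$ being $\d$-exterior is equivalent to $\varepsilon(X) \subset \varepsilon^{\r}(X)$, so every $E \in \varepsilon(X)$ is an $\r$-exterior open subset: for each $x \in X$ there is $n \in \N$ with $\phi^m(x) \in E$ for all $m \geq n$. Equivalently, since the co-compact externology on $\N$ has $[k,+\infty)$ cofinal among its complements-of-compacts, $\phi_x^{-1}(E) \in \varepsilon^{\e}(\N)$ contains some tail $[n,+\infty)$.

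First I would fix $x \in P(X)$, so there is $p \in \N$, $p \neq 0$, with $\phi^p(x) = x$, hence $\phi^{kp}(x) = x$ for every $k \in \N$. Let $E \in \varepsilon(X)$ be arbitrary. By the $\d$-exterior hypothesis there is $n \in \N$ with $\phi^m(x) \in E$ for all $m \geq n$. Choose $k \in \N$ large enough that $kp \geq n$; then $x = \phi^{kp}(x) \in E$. Since $E \in \varepsilon(X)$ was arbitrary, $x \in \bigcap_{E \in \varepsilon(X)} E = L(X)$. This proves $P(X) \subset L(X)$.

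For the second assertion, let $X = (X,\phi)$ be any discrete semi-flow and form $X^{\r} = (X,\phi,\varepsilon^{\r}(X))$. As noted immediately before the lemma (and following from the definition of $\varepsilon^{\r}(X)$), $X^{\r}$ is a $\d$-exterior discrete semi-flow, so the first part applies and gives $P(X^{\r}) \subset L(X^{\r})$. Finally $P(X^{\r}) = P(X)$, since the set of periodic points depends only on the underlying semi-flow structure $\phi$ and not on the chosen externology; hence $P(X) \subset L(X^{\r})$, as claimed.

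There is essentially no obstacle here: the only point requiring a moment's care is the translation between "$\phi_x$ is exterior" and "some tail $[n,+\infty)$ maps into $E$", which uses that the complements of compact subsets of $\N$ are exactly the bounded subsets, so $\phi_x^{-1}(E)$ being co-compact in $\N$ forces it to contain a tail. Everything else is the elementary observation that a periodic orbit returns to its starting point infinitely often, hence meets any set that contains a tail of the trajectory.
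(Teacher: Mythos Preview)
Your argument is correct and follows essentially the same line as the paper's proof: both use that for $E \in \varepsilon(X)$ some tail $[n,+\infty)\cdot x$ lies in $E$, and then periodicity forces $x$ itself into that tail (the paper phrases this as $(n,+\infty)\cdot x = \N\cdot x \ni x$, while you pick a specific multiple $kp \ge n$). Your explicit treatment of the ``in particular'' clause via $X^{\r}$ being $\d$-exterior is exactly what the paper intends.
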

\medskip
\begin{proof} Take a periodic point $x$ and an arbitrary $E\in \varepsilon(X)$.
Then, there exists $n \in \N$ such that $(n,+\infty)\cdot x \subset E.$ Since
$x$ is periodic, $(n,+\infty)\cdot x= \N \cdot x$ and, taking into account
that $x\in \N \cdot x,$ we have that $x\in E.$
\end{proof}

\begin{lemma}\label{dos}
Let $X$ be a discrete semi-flow and suppose that $X$ is a $T_1$-space. Then, for
every $x\in X$,  the following statements are equivalent:
 \begin{itemize}
  \item[(i)] $x$ is a non-periodic point.
  \item[(ii)]  $X\setminus \{x\}$ is  an $\r$-exterior open subset of $X.$
\end{itemize}
\end{lemma}
\medskip
\begin{proof} In order to prove that (i) implies (ii), take $y\in X$; if $(\N \cdot y) \cap (\N \cdot x )= \emptyset $,  then, for every $n\in
\N,$ $(n,+\infty) \cdot y \subset X\setminus \{x\}.$ If $(\N \cdot y) \cap (\N \cdot x) \not = \emptyset $, considering that $x$ is not periodic, one can
find $n\in \N$ such that $(n,+\infty)\cdot y \subset X\setminus \{x\}.$
Then, one has that $X\setminus \{x\} \in \varepsilon^{\r} (X).$
Conversely, suppose that $x$ is a periodic point. Then, by Lemma
\ref{uno} above, $X\setminus \{x\}$ is not $\r$-exterior.
\end{proof}

Using these two lemmas, we obtain the following result.

\begin{theorem}\label{periodic} Let $X$ be a discrete semi-flow and suppose that $X$ is a
$T_1$-space. Then $$P(X)=L(X^{\r}).$$
\end{theorem}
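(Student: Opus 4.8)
The plan is to prove the two inclusions $P(X) \subset L(X^{\r})$ and $L(X^{\r}) \subset P(X)$ separately, using the two preceding lemmas as black boxes. The first inclusion is essentially Lemma \ref{uno}: since $X^{\r} = (X,\phi,\varepsilon^{\r}(X))$ is a $\d$-exterior discrete semi-flow (as noted right after the definition of $\varepsilon^{\r}(X)$), Lemma \ref{uno} gives $P(X) \subset L(X^{\r})$ directly, with no extra work and without using the $T_1$ hypothesis.

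For the reverse inclusion I would argue by contrapositive. Suppose $x \in X$ is not periodic; I want to show $x \notin L(X^{\r})$. Here is where the $T_1$ hypothesis enters: by Lemma \ref{dos}, non-periodicity of $x$ together with $X$ being $T_1$ implies that $X \setminus \{x\}$ is an $\r$-exterior open subset, i.e. $X\setminus\{x\} \in \varepsilon^{\r}(X)$. But then, since $L(X^{\r}) = \bigcap_{E \in \varepsilon^{\r}(X)} E$ is the intersection of all $\r$-exterior open subsets, we get $L(X^{\r}) \subset X \setminus \{x\}$, hence $x \notin L(X^{\r})$. Taking the contrapositive, $x \in L(X^{\r})$ forces $x \in P(X)$, which is the inclusion $L(X^{\r}) \subset P(X)$.

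Combining the two inclusions yields $P(X) = L(X^{\r})$. I do not expect any genuine obstacle here: the real content has already been isolated into Lemmas \ref{uno} and \ref{dos}, and the theorem is just the conjunction of those two facts packaged as a set equality. The only mild subtlety worth stating cleanly is that the limit space is by definition the intersection $\bigcap_{E\in\varepsilon^{\r}(X)}E$, so exhibiting a single $\r$-exterior open set that omits $x$ suffices to conclude $x\notin L(X^{\r})$; and that the $T_1$ assumption is used solely through Lemma \ref{dos} and not elsewhere.
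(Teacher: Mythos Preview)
Your proposal is correct and follows essentially the same approach as the paper: both inclusions are obtained from Lemmas \ref{uno} and \ref{dos}, with the reverse inclusion established by noting that each non-periodic $x$ lies outside the $\r$-exterior open set $X\setminus\{x\}$ and hence outside $L(X^{\r})$. The only cosmetic difference is that the paper phrases the contrapositive step as the chain $P(X)=\bigcap_{x\notin P(X)} (X\setminus\{x\})\supset \bigcap_{E\in\varepsilon^{\r}(X)}E=L(X^{\r})$, which is exactly your argument written as a single intersection identity.
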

\medskip
\begin{proof} Let $x\in X\setminus P(X)$.
Then, by Lemma \ref{dos}, one has that $X\setminus \{x\} \in
\varepsilon^{\r} (X)$ and $$P(X)=X \setminus \left(\bigcup_{x \notin
P(X)}\{x\}\right)=\bigcap_{x \notin P(X)} X\setminus \{x\}\supset
\bigcap_{E\in \varepsilon^{\r}(X)}E=L(X^{\r}).$$ Now, the result
follows from Lemma \ref{uno}.
\end{proof}

Taking into account the theorem above, if $X$ is a $T_1$ discrete semi-flow, in the diagram of distinguished
sub-flows  $$P(X)\subset{\rm Poisson}(X)\subset \Lambda (X^{\r})$$ we have identify the first sub-flow as a limit set:
$$L(X^{\r})=P(X)\subset{\rm Poisson}(X)\subset \Lambda (X^{\r}) \subset X.$$

\subsection{Limits and omega-limits}

In the following result, we analyze the relationship between the
omega-limit and the bar-limit induced by an externology.

\begin{lemma}\label{omega_bar_limit} If $X$ is an exterior discrete semi-flow, then
$$\Lambda (D(X))\subset \Lambda (\bar D(X)) \subset \overline{\Lambda (\bar D(X))} \subset \bar L (X).$$
\end{lemma}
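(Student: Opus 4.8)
The plan is to establish the chain of four inclusions from left to right, dealing with each of the three nontrivial steps in turn. The first inclusion $\Lambda(D(X)) \subset \Lambda(\bar D(X))$ is immediate from the definition of $\Lambda$ as a union over points together with the containment $D(X) \subset \bar D(X)$, which is part (i) of Theorem \ref{regionattraction}. The third inclusion $\Lambda(\bar D(X)) \subset \overline{\Lambda(\bar D(X))}$ is simply the fact that any set is contained in its closure, so there is nothing to prove there. Thus the entire content lies in the last inclusion, $\overline{\Lambda(\bar D(X))} \subset \bar L(X)$.

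For that last inclusion, I would first observe from Theorem \ref{regionattraction}(i) that $\bar D(X) \subset \text{PA}(\bar L(X))$, which by definition of the region of pseudo-attraction means $\Lambda(x) \subset \bar L(X)$ for every $x \in \bar D(X)$. Taking the union over all such $x$ gives $\Lambda(\bar D(X)) \subset \bar L(X)$. The remaining point is that $\bar L(X)$ is closed in $X$: indeed, $\bar L(X) = \bigcap_{E \in \varepsilon(X)} \overline{E}$ is an intersection of closed subsets of $X$, hence closed. Therefore, taking closures in $\overline{\Lambda(\bar D(X))} \subset \overline{\bar L(X)} = \bar L(X)$ completes the argument.

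The main (and only genuine) obstacle here is recognizing that the whole statement is essentially a repackaging of Theorem \ref{regionattraction}(i) combined with the closedness of the bar-limit space; once that is seen, the proof is a two-line formal manipulation. I would write it out as: the first inclusion follows from $D(X) \subset \bar D(X)$; the second from Theorem \ref{regionattraction}(i) applied pointwise and the fact that $\bar L(X)$ is closed, so that $\overline{\Lambda(\bar D(X))} \subset \overline{\bar L(X)} = \bar L(X)$; and the middle inclusion is trivial. No countability, Lagrange stability, or delicate net arguments are needed for this particular lemma, in contrast to the theorems preceding it.
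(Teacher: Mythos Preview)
Your proof is correct and follows essentially the same route as the paper: both arguments reduce to showing that $\Lambda(x)\subset \bar L(X)$ for every $x\in \bar D(X)$ and then invoke the closedness of $\bar L(X)=\bigcap_{E\in\varepsilon(X)}\overline{E}$. The only difference is cosmetic: the paper re-derives the inclusion $\Lambda(x)\subset \bar L(X)$ directly from the definition of $\bar D(X)$, whereas you obtain it by quoting Theorem~\ref{regionattraction}(i) (whose proof of this step is in fact word-for-word the same computation).
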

\medskip
\begin{proof} If $E \in \varepsilon (X)$, then, for every $x \in \bar D(X)$, there
exists $n\in \N$ such that $(n,+\infty)\cdot x \subset \overline{E}$ and therefore
$\overline{(n,+\infty)\cdot x} \subset \overline{E}.$ By definition, this
implies that $\Lambda (x) \subset \bar L (X)$, for every $x \in \bar D(X)$.
Hence, $\Lambda ( \bar D(X)) \subset \bar L(X).$ Taking into account that
$\bar L (X)$ is a closed subset, we also have
$\overline{\Lambda (\bar D(X))} \subset \bar L (X)$.
\end{proof}

And now, let us present some technical results.

\begin{lemma}\label{exterior_bar_limit}
Let $(X,\varepsilon(X))$ be an exterior space  and
$x\in X.$ Then, there exists an open neighborhood $V_x$ at $x$ such that
$X\setminus \overline{V}_x \in \varepsilon(X)$ if and only if $x
\notin \bar L (X)$.
\end{lemma}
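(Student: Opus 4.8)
The statement is an ``iff'' characterizing, for a point $x$ in an exterior space $(X,\varepsilon(X))$, when $x\notin\bar L(X)$ in terms of the existence of an open neighborhood $V_x$ of $x$ with $X\setminus\overline{V_x}\in\varepsilon(X)$. Recall $\bar L(X)=\bigcap_{E\in\varepsilon(X)}\overline{E}$, so $x\notin\bar L(X)$ means precisely that there exists $E_0\in\varepsilon(X)$ with $x\notin\overline{E_0}$. The plan is to prove both implications directly from this reformulation, using only elementary point-set topology (separation of a point from a closed set by open sets) together with the upward-closure axiom of an externology.

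\textbf{($\Leftarrow$)} Suppose there is an open neighborhood $V_x$ of $x$ with $X\setminus\overline{V_x}\in\varepsilon(X)$. Set $E_0=X\setminus\overline{V_x}$. Then $E_0$ is an exterior open subset, and $\overline{E_0}\subset X\setminus V_x$ since $V_x$ is open and disjoint from $E_0$. As $x\in V_x$, we get $x\notin\overline{E_0}$, hence $x\notin\bigcap_{E\in\varepsilon(X)}\overline{E}=\bar L(X)$.

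\textbf{($\Rightarrow$)} Suppose $x\notin\bar L(X)$, so there is $E_0\in\varepsilon(X)$ with $x\notin\overline{E_0}$. Then $X\setminus\overline{E_0}$ is an open set containing $x$; choose $V_x$ to be any open neighborhood of $x$ whose closure is contained in $X\setminus\overline{E_0}$ if $X$ is, say, regular --- but since no separation hypothesis is assumed, I would instead argue more carefully. The cleanest route that needs no separation axiom: take $V_x=X\setminus\overline{E_0}$ itself. This is an open neighborhood of $x$. Its closure satisfies $\overline{V_x}\subset\overline{X\setminus\overline{E_0}}$, and while this need not be contained in $E_0$, we do have $X\setminus\overline{V_x}\supset X\setminus\overline{X\setminus\overline{E_0}}=\mathring{\overline{E_0}}\supset E_0$, where the last inclusion holds because $E_0$ is open. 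Thus $X\setminus\overline{V_x}$ is an open subset of $X$ containing the exterior open set $E_0$, and by the upward-closure property of the externology $\varepsilon(X)$ it follows that $X\setminus\overline{V_x}\in\varepsilon(X)$, as required.

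\textbf{Main obstacle.} The only subtlety is the backward direction of the $(\Rightarrow)$ argument: one must resist the temptation to shrink $V_x$ to a neighborhood with $\overline{V_x}\subset X\setminus\overline{E_0}$, which would require a separation hypothesis the statement does not make. The trick is to take $V_x$ as large as possible, namely $V_x=X\setminus\overline{E_0}$, so that $X\setminus\overline{V_x}$ becomes the \emph{interior} of $\overline{E_0}$, which automatically contains the open set $E_0$; then the externology axiom ``$E_0\subset U$ open $\Rightarrow U\in\varepsilon(X)$'' closes the argument. Everything else is a routine manipulation of closures and complements.
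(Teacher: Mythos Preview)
Your proof is correct and follows essentially the same route as the paper's: in the direction starting from $x\notin\bar L(X)$ you both take $V_x=X\setminus\overline{E_0}$ and use $X\setminus\overline{V_x}=\mathrm{Int}(\overline{E_0})\supset E_0$ together with upward-closure of the externology. One cosmetic slip: your labels $(\Leftarrow)$ and $(\Rightarrow)$ are interchanged relative to the statement as written (``there exists $V_x$ \ldots\ iff $x\notin\bar L(X)$'').
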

\medskip
\begin{proof} If $X\setminus \overline{V}_x  \in \varepsilon(X)$,
then, taking into account that $V_x \cap (X\setminus
\overline{V}_x )= \emptyset $, we have that $x \notin
\overline{X\setminus \overline{V}_x}$. Since $X\setminus
\overline{V}_x\in \varepsilon(X)$, it follows that $x\notin
\bigcap_{E \in \varepsilon(X)} \overline{E}=\bar L (X)$.

Conversely, if $x \notin \bar L (X),$ then there exists $E \in
\varepsilon(X)$ such that $x\notin \overline{E}.$  Now, taking $V_x=
X\setminus \overline{E}=\mbox{Int}(X\setminus E)$, we have that
$X\setminus \overline{V}_x=\mbox{Int}(X\setminus
V_x)=\mbox{Int}(\overline{E}) \supset E.$ Consequently,
$X\setminus \overline{V}_x\in \varepsilon(X)$.
\end{proof}

\begin{lemma}\label{noper}
Let $X$ be an exterior discrete semi-flow, let $\varepsilon(X)$ be its externology and let
$x\in X.$ If there exists an open neighborhood $V_x$ at $x$ such that
$X\setminus \overline{V}_x\in \varepsilon(X)$, then $x \notin
\overline{\Lambda ( \bar D(X))}.$
\end{lemma}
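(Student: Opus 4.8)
The plan is to show the contrapositive-style chain: if $V_x$ is an open neighborhood of $x$ with $X \setminus \overline{V}_x \in \varepsilon(X)$, then no point of $\Lambda(\bar D(X))$ can be close to $x$, which forces $x \notin \overline{\Lambda(\bar D(X))}$. First I would observe that $E_0 := X \setminus \overline{V}_x$ is an exterior open subset, and that $V_x \cap E_0 = \emptyset$; hence $V_x$ is an open neighborhood of $x$ disjoint from $E_0$. The key point is that every trajectory of a point in $\bar D(X)$ is eventually inside $\overline{E_0}$, and since $V_x \cap E_0 = \emptyset$ we have $\overline{E_0} \subset X \setminus V_x$ (because $X \setminus V_x$ is closed and contains $E_0$). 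Wait — that is not quite right: $\overline{E_0}$ need not avoid $V_x$. So instead I would use a slightly smaller neighborhood.

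More carefully: from Lemma \ref{exterior_bar_limit}'s proof, having $X \setminus \overline{V}_x \in \varepsilon(X)$ is equivalent to $x \notin \bar L(X)$, so there is $E \in \varepsilon(X)$ with $x \notin \overline{E}$. Then $W_x := X \setminus \overline{E}$ is an open neighborhood of $x$, and $W_x \cap \overline{E} = \emptyset$, i.e. $W_x \cap E = \emptyset$. Now for any $z \in \bar D(X)$, there is $n \in \N$ with $(n, +\infty) \cdot z \subset \overline{E}$, hence $\overline{(n,+\infty)\cdot z} \subset \overline{E}$, so $\Lambda(z) \subset \overline{E}$. Therefore $\Lambda(\bar D(X)) \subset \overline{E}$, and since $\overline{E}$ is closed, $\overline{\Lambda(\bar D(X))} \subset \overline{E}$. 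Finally, $x \in W_x$ and $W_x \cap \overline{E} = \emptyset$ together with $\overline{\Lambda(\bar D(X))} \subset \overline{E}$ give $x \notin \overline{\Lambda(\bar D(X))}$, which is exactly what we want.

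I would present the argument in that order: (1) invoke Lemma \ref{exterior_bar_limit} to get $E \in \varepsilon(X)$ with $x \notin \overline{E}$; (2) set $W_x = X \setminus \overline{E}$, an open neighborhood of $x$ with $W_x \cap \overline{E} = \emptyset$; (3) using the definition of $\bar D(X)$, show $\Lambda(\bar D(X)) \subset \overline{E}$ and hence $\overline{\Lambda(\bar D(X))} \subset \overline{E}$; (4) conclude $x \notin \overline{\Lambda(\bar D(X))}$. Steps (3) is essentially a repeat of the computation in Lemma \ref{omega_bar_limit} and steps (1)--(2) are immediate from the already-proven equivalence.

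The main obstacle — really the only subtlety — is resisting the temptation to work directly with the given $V_x$ and its closure $\overline{V}_x$; the cleaner route is to pass through the equivalence $x \notin \bar L(X)$ supplied by Lemma \ref{exterior_bar_limit} and then pick the more convenient neighborhood $X \setminus \overline{E}$, whose complement's closure is controlled. Everything else is a routine unwinding of the definition of $\bar D(X)$ together with closedness of $\overline{E}$.
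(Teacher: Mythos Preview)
Your proposal is correct and follows essentially the same route as the paper: the paper's proof is the one-liner ``It is a consequence of Lemma \ref{exterior_bar_limit} and Lemma \ref{omega_bar_limit},'' and your steps (1)--(2) are exactly Lemma \ref{exterior_bar_limit} while your step (3) reproduces the argument of Lemma \ref{omega_bar_limit}. The only difference is presentational: you re-derive $\overline{\Lambda(\bar D(X))}\subset\overline{E}$ for a single $E$ rather than invoking the already-proved inclusion $\overline{\Lambda(\bar D(X))}\subset\bar L(X)$ directly, but this is a harmless unpacking.
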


\begin{proof} It is a consequence of Lemma \ref{exterior_bar_limit}
and  Lemma \ref{omega_bar_limit}.
\end{proof}

\begin{proposition}
Let $X$ be a discrete semi-flow  and $x\in X.$ If there exists
an open neighborhood $V_x$ at $x$  such that $X\setminus \overline{V}_x$ is
$\r$-exterior, then $x\notin \overline{\Lambda (X)}.$
\end{proposition}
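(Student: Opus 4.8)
The plan is to deduce this proposition directly from the earlier machinery by applying it to the particular exterior discrete semi-flow $X^{\r}=(X,\phi,\varepsilon^{\r}(X))$ associated to the bare discrete semi-flow $(X,\phi)$. The point is that for this externology one has $D(X^{\r})=X$ (as noted right after the definition of $\varepsilon^{\r}(X)$, and again in Lemma \ref{exteriorandD}), and consequently also $\bar D(X^{\r})=X$ since $D(X)\subset\bar D(X)\subset X$ always. Thus $\Lambda(\bar D(X^{\r}))=\Lambda(X)$, and the hypothesis of the proposition is precisely the hypothesis of Lemma \ref{noper} for the exterior discrete semi-flow $X^{\r}$.

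So the proof is short. First I would record that, since $\varepsilon^{\r}(X)$ makes $(X,\phi,\varepsilon^{\r}(X))$ a $\d$-exterior discrete semi-flow, Lemma \ref{exteriorandD} gives $D(X^{\r})=X$, hence $\bar D(X^{\r})=X$. Next, given the open neighborhood $V_x$ of $x$ with $X\setminus\overline{V}_x$ an $\r$-exterior open subset, i.e.\ $X\setminus\overline{V}_x\in\varepsilon^{\r}(X)$, apply Lemma \ref{noper} to $X^{\r}$: it yields $x\notin\overline{\Lambda(\bar D(X^{\r}))}$. Finally substitute $\bar D(X^{\r})=X$ to conclude $x\notin\overline{\Lambda(X)}$.

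I do not expect any real obstacle here; the only thing to be careful about is the bookkeeping between the ``abstract'' exterior-space statements (Lemmas \ref{exterior_bar_limit}, \ref{omega_bar_limit}, \ref{noper}) and their instantiation at the right-absorbing externology, so that the notions ``$X\setminus\overline{V}_x\in\varepsilon(X)$'' and ``$X\setminus\overline{V}_x$ is $\r$-exterior'' are literally identified. A clean way to phrase it is: consider $X$ as the exterior discrete semi-flow $X^{\r}$; then the statement ``$X\setminus\overline{V}_x$ is $\r$-exterior'' is exactly ``$X\setminus\overline{V}_x\in\varepsilon^{\r}(X)=\varepsilon(X^{\r})$'', and one invokes Lemma \ref{noper} verbatim.

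A slightly more self-contained alternative, if one prefers not to route through Lemma \ref{noper}, is to chain the ingredients by hand: Lemma \ref{exterior_bar_limit} applied to $(X,\varepsilon^{\r}(X))$ gives $x\notin\bar L(X^{\r})$; Lemma \ref{omega_bar_limit} applied to $X^{\r}$ gives $\overline{\Lambda(\bar D(X^{\r}))}\subset\bar L(X^{\r})$; and since $\bar D(X^{\r})=X$ by Lemma \ref{exteriorandD}, the two together yield $x\notin\overline{\Lambda(X)}$. Either way the argument is essentially a one-line specialization, so the write-up should simply say ``it is a consequence of Lemma \ref{noper} (or of Lemmas \ref{exterior_bar_limit}, \ref{omega_bar_limit} and \ref{exteriorandD}), applied to the exterior discrete semi-flow $X^{\r}$''.
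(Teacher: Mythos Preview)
Your proposal is correct and follows essentially the same route as the paper: the paper's proof simply states that the result is a consequence of Lemma~\ref{noper} and Lemma~\ref{exteriorandD}, which is exactly your specialization of Lemma~\ref{noper} to $X^{\r}$ using $D(X^{\r})=X$ (hence $\bar D(X^{\r})=X$) from Lemma~\ref{exteriorandD}.
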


\begin{proof} It is a consequence of Lemma \ref{noper}
and  Lemma \ref{exteriorandD}.
\end{proof}

\begin{lemma} Let $X$ be a discrete semi-flow which is  a locally compact regular space.
If  $x \notin\overline{ \Lambda (X)},$ then there exists an open
neighborhood $V_x$ at $x$ such that $X\setminus \overline{V}_x$ is
$\r$-exterior.
\end{lemma}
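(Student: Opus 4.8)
The plan is to prove the contrapositive-free direct statement: assuming $x\notin\overline{\Lambda(X)}$ and that $X$ is a locally compact regular space, we must produce an open neighborhood $V_x$ of $x$ with $X\setminus\overline{V}_x\in\varepsilon^{\r}(X)$. First I would use that $\overline{\Lambda(X)}$ is closed and $x\notin\overline{\Lambda(X)}$, together with local compactness and regularity, to choose an open neighborhood $V_x$ of $x$ with $\overline{V}_x$ compact and $\overline{V}_x\cap\overline{\Lambda(X)}=\emptyset$; regularity lets us separate $x$ from the closed set $\overline{\Lambda(X)}$, and local compactness lets us shrink further so that the closure is compact. Set $E=X\setminus\overline{V}_x$, which is open; the goal is to show $E\in\varepsilon^{\r}(X)$, i.e. that for every $y\in X$ there is $n$ with $\phi^m(y)\in E$ for all $m\ge n$, equivalently that the trajectory of $y$ is eventually outside the compact set $\overline{V}_x$.

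The key step is the following claim: for each $y\in X$, the set $\{m\in\N\mid \phi^m(y)\in\overline{V}_x\}$ is finite. Suppose not; then there is an increasing sequence $n_1<n_2<\cdots$ with $\phi^{n_k}(y)\in\overline{V}_x$. Since $\overline{V}_x$ is compact, the net $(\phi^{n_k}(y))_k$ has a convergent subnet with limit $z\in\overline{V}_x$. By Definition \ref{rightlimit}, $z\in\Lambda(y)\subset\Lambda(X)\subset\overline{\Lambda(X)}$, so $z\in\overline{V}_x\cap\overline{\Lambda(X)}=\emptyset$, a contradiction. Hence the trajectory of $y$ meets $\overline{V}_x$ only finitely often, so there is $n_y\in\N$ with $\phi^m(y)\in X\setminus\overline{V}_x=E$ for all $m\ge n_y$. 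Since $y$ was arbitrary, $E\in\varepsilon^{\r}(X)$, and $X\setminus\overline{V}_x=E$ is $\r$-exterior, as desired.

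The main obstacle is the compactness argument extracting a limit point in $\overline{\Lambda(X)}$: we need $\overline{V}_x$ to be compact to guarantee a convergent subnet, which is exactly why local compactness is hypothesized, and we need the limit point to actually lie in $\Lambda(y)$, which is immediate from the net characterization of $\Lambda(y)$ once we pass to a subnet indexed so that the relevant indices go to $+\infty$ (this is automatic since $n_k\to+\infty$). A minor technical point is ensuring at the outset that $V_x$ can be chosen with \emph{compact closure disjoint from} $\overline{\Lambda(X)}$: pick an open $W$ with $x\in W$, $\overline{W}$ compact (local compactness), then use regularity to find open $V_x$ with $x\in V_x\subset\overline{V}_x\subset W\setminus\overline{\Lambda(X)}$; then $\overline{V}_x$ is a closed subset of the compact set $\overline{W}$, hence compact, and disjoint from $\overline{\Lambda(X)}$. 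With these choices in place the rest is routine.
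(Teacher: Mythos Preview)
Your proof is correct and follows essentially the same approach as the paper: both use local compactness to obtain a compact neighborhood of $x$ disjoint from $\Lambda(X)$, argue by contradiction that any trajectory hitting this compact set infinitely often would have a subnet converging to a point of $\Lambda(X)$, and invoke regularity to produce the required $V_x$ with $\overline{V}_x$ inside that compact. The only cosmetic difference is that you apply regularity first to build $V_x$ with compact closure and then run the compactness argument on $\overline{V}_x$, whereas the paper runs the argument on a compact neighborhood $K$ and applies regularity at the end to fit $\overline{V}_x\subset K$.
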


\begin{proof} Suppose that $x \notin\overline{ \Lambda (X)}.$
Since $X$ is locally compact, there is a compact neighborhood $K$
at $x$ such that $K \cap \Lambda (X)= \emptyset.$ Take $y\in X$ and
assume that, for every $m \in \N$, $(m,+\infty)\cdot y \cap K \not =
\emptyset .$ Then, there is a sequence $m_n\rightarrow +\infty $
such that $m_{n}\cdot y\in K.$ Being $K$ compact, one can take a
subnet $m_{n_i}\rightarrow +\infty$ such that $m_{n_i}\cdot y
\to u \in K.$ This fact implies that $u\in K \cap \Lambda (y)
\subset K\cap \Lambda (X),$ which is a contradiction. Therefore,
there is $m\in \N$ such that $(m,+\infty) \cdot y \cap K= \emptyset.$ By the
regularity of $X$, there exists an open neighborhood $V_x$ at $x$ such that
$\overline{V}_x \subset K$ and $X\setminus \overline{V}_x$ is
$\r$-exterior.
\end{proof}

\begin{corollary} Let $X$ be a discrete semi-flow. If $X$ is a locally compact regular space,
then $\bar L(X^{\r}) \subset \overline{\Lambda (X)}.$
\end{corollary}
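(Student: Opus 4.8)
The plan is to show the inclusion $\bar L(X^{\r}) \subset \overline{\Lambda(X)}$ by a contrapositive argument, using the two preceding results as a package. So suppose $x \notin \overline{\Lambda(X)}$; I want to conclude $x \notin \bar L(X^{\r})$, which by definition means there is some $\r$-exterior open subset $E$ with $x \notin \overline{E}$.

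First I would invoke the preceding lemma (the one asserting that for a locally compact regular discrete semi-flow, $x \notin \overline{\Lambda(X)}$ implies the existence of an open neighborhood $V_x$ of $x$ with $X \setminus \overline{V}_x$ being $\r$-exterior). This gives me directly such a $V_x$. Then I would apply Lemma \ref{exterior_bar_limit} in the direction ``$X \setminus \overline{V}_x \in \varepsilon(X)$ implies $x \notin \bar L(X)$'', taking the externology $\varepsilon(X)$ there to be $\varepsilon^{\r}(X)$; note that $X^{\r}$ is an exterior discrete semi-flow (indeed $\d$-exterior) by the discussion opening this section, so Lemma \ref{exterior_bar_limit} applies to it. Concluding, $x \notin \bar L(X^{\r})$, which is exactly what we wanted; contraposing yields $\bar L(X^{\r}) \subset \overline{\Lambda(X)}$.

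The argument is essentially a two-line chaining of the previous lemma and Lemma \ref{exterior_bar_limit} (specialized to the right-absorbing externology), so there is no serious obstacle; the only point requiring a moment's care is making sure the hypotheses line up — specifically that the preceding lemma is stated precisely for discrete semi-flows that are locally compact and regular (which matches the corollary's hypothesis), and that $X \setminus \overline{V}_x$ being $\r$-exterior is literally the statement $X \setminus \overline{V}_x \in \varepsilon^{\r}(X)$, so that Lemma \ref{exterior_bar_limit} can be used verbatim with $\varepsilon(X) = \varepsilon^{\r}(X)$. One should also recall that $\bar L(X^{\r})$ denotes $\bigcap_{E \in \varepsilon^{\r}(X)} \overline{E}$, so the conclusion ``$x \notin \overline{E}$ for some $\r$-exterior $E$'' is exactly ``$x \notin \bar L(X^{\r})$''. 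With these identifications in place the corollary follows immediately.

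\begin{proof}
Suppose $x\notin \overline{\Lambda(X)}$. By the previous lemma, there exists an open neighborhood $V_x$ at $x$ such that $X\setminus\overline{V}_x$ is $\r$-exterior, i.e., $X\setminus\overline{V}_x\in\varepsilon^{\r}(X)$. Since $X^{\r}=(X,\phi,\varepsilon^{\r}(X))$ is an exterior discrete semi-flow, Lemma \ref{exterior_bar_limit} applies with externology $\varepsilon^{\r}(X)$ and gives $x\notin\bar L(X^{\r})$. Hence $\bar L(X^{\r})\subset\overline{\Lambda(X)}$.
\end{proof}
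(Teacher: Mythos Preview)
Your proof is correct and essentially identical to the paper's: both argue by contraposition, invoke the preceding lemma to obtain a $V_x$ with $X\setminus\overline{V}_x$ $\r$-exterior, and then apply Lemma \ref{exterior_bar_limit} with the right-absorbing externology to conclude $x\notin\bar L(X^{\r})$. The only difference is cosmetic: you spell out that $X^{\r}$ is an exterior discrete semi-flow, though Lemma \ref{exterior_bar_limit} only requires an exterior space.
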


\begin{proof} If $x \notin \overline{\Lambda (X)}$, by the lemma above there exists an open
neighborhood $V_x$ at $x$  such that $X\setminus \overline{V}_x$
is  $\r$-exterior. By Lemma \ref{exterior_bar_limit}, it follows
that $x \notin \bar L(X^{\r}).$
\end{proof}

By the corollary above and Lemma \ref{omega_bar_limit}, we obtain
the following result.

\begin{theorem} \label{main}Let $X$ be a discrete semi-flow. If $X$ is a locally compact
regular space, then $\bar L(X^{\r}) = \overline{\Lambda (X)}.$
\end{theorem}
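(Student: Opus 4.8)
The plan is to obtain the equality $\bar L(X^{\r}) = \overline{\Lambda(X)}$ by proving the two inclusions separately, each of which is essentially already available from the results accumulated in the section. First I would establish $\bar L(X^{\r}) \subset \overline{\Lambda(X)}$. This is immediate from the Corollary stated just above the theorem, which asserts exactly this inclusion under the hypothesis that $X$ is a locally compact regular space; that hypothesis is precisely what the theorem assumes, so nothing new is needed here.

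For the reverse inclusion $\overline{\Lambda(X)} \subset \bar L(X^{\r})$, I would invoke Lemma \ref{omega_bar_limit} applied to the $\d$-exterior discrete semi-flow $X^{\r} = (X,\phi,\varepsilon^{\r}(X))$. Since $D(X^{\r}) = X$ (recall that for the right-absorbing externology one always has $D(X)=X$, as noted after the definition of $\varepsilon^{\r}(X)$ and in Lemma \ref{exteriorandD}), the chain in Lemma \ref{omega_bar_limit} reads
$$\Lambda(X) = \Lambda(D(X^{\r})) \subset \overline{\Lambda(D(X^{\r}))} = \overline{\Lambda(X)} \subset \bar L(X^{\r}).$$
In particular $\overline{\Lambda(X)} \subset \bar L(X^{\r})$, which is the inclusion we want. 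Combining the two inclusions yields the stated equality.

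The key steps, in order, are therefore: (1) recognize that $X^{\r}$ is a $\d$-exterior discrete semi-flow with $D(X^{\r}) = X$, so that the general lemmas of the section apply with $\bar D$ replaced by the whole space; (2) apply the preceding Corollary to get $\bar L(X^{\r}) \subset \overline{\Lambda(X)}$, using local compactness and regularity; (3) apply Lemma \ref{omega_bar_limit} to get $\overline{\Lambda(X)} \subset \bar L(X^{\r})$, using no extra hypotheses; (4) conclude equality. I do not anticipate a genuine obstacle here, since the theorem is a clean packaging of the Corollary and Lemma \ref{omega_bar_limit}; the only point requiring a moment's care is making sure the topological hypotheses (locally compact, regular) are invoked exactly where the Corollary needs them, and observing that the reverse inclusion is free of those hypotheses. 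Indeed the proof can be compressed to a single sentence citing ``the Corollary above and Lemma \ref{omega_bar_limit}'', which is the form I would ultimately present.
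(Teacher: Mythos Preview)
Your proposal is correct and matches the paper's proof exactly: the paper's argument is literally the single sentence ``By the corollary above and Lemma \ref{omega_bar_limit}, we obtain the following result,'' which is precisely the compression you anticipated. Your unpacking of why $D(X^{\r})=X$ makes Lemma \ref{omega_bar_limit} yield $\overline{\Lambda(X)}\subset \bar L(X^{\r})$ is the right justification.
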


\begin{corollary} \label{reticulo} Let $X$ be a discrete semi-flow. If $X$ is a locally compact
$T_3$  space, then $$L(X^{\r})=P(X)\subset{\rm Poisson}(X)\subset
\Lambda(X) \subset \overline{\Lambda (X)}=\bar L(X^{\r}).$$
\end{corollary}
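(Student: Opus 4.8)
The statement to prove, Corollary~\ref{reticulo}, is essentially an assembly of results already established in the paper, so the plan is mainly to identify which hypotheses feed into which prior result and to chain the inclusions together. The hypothesis ``$X$ is a locally compact $T_3$ space'' should be read as: $X$ is locally compact and regular (which is what $T_3$ means here, possibly together with $T_1$), so in particular $X$ is a $T_1$-space and $X$ is a locally compact regular space. I would begin by invoking Theorem~\ref{periodic}: since $X$ is $T_1$, we have $L(X^{\r})=P(X)$. This gives the leftmost equality.

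Next I would recall the elementary chain of distinguished sub-flows noted right after Theorem~\ref{periodic} (and originally observed after Definition~\ref{rightlimit}): $P(X)\subset{\rm Poisson}(X)\subset\Lambda(X^{\r})$, where $\Lambda(X^{\r})=\Lambda(X)$ since the omega-limit set does not depend on the externology but only on the underlying semi-flow $(X,\phi)$. The inclusion $\Lambda(X)\subset\overline{\Lambda(X)}$ is trivial. Finally, for the rightmost equality $\overline{\Lambda(X)}=\bar L(X^{\r})$, I would cite Theorem~\ref{main}, which applies precisely because $X$ is a locally compact regular space. Concatenating these, one obtains
$$L(X^{\r})=P(X)\subset{\rm Poisson}(X)\subset\Lambda(X)\subset\overline{\Lambda(X)}=\bar L(X^{\r}),$$
which is the desired statement.

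There is really no hard part here: the corollary is a packaging result, and the only thing that requires a moment's care is checking that the single hypothesis ``locally compact $T_3$'' simultaneously supplies the $T_1$ assumption needed for Theorem~\ref{periodic} and the ``locally compact regular'' assumption needed for Theorem~\ref{main}. Since a $T_3$ space is by convention regular and $T_1$ (hence all points are closed and the space is regular), both are immediate. I would also note in passing that the inclusions in the middle, $P(X)\subset{\rm Poisson}(X)\subset\Lambda(X)$, are the ones recorded in the remark following Definition~\ref{rightlimit}, so nothing new needs to be proved about them. The proof is therefore a two-line citation of Theorems~\ref{periodic} and~\ref{main} together with the trivial inclusions.
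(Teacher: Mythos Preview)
Your proposal is correct and follows exactly the paper's own approach: the paper's proof is simply the one-line statement ``This is a consequence of Theorems~\ref{periodic} and~\ref{main}.'' Your additional remark that the hypothesis ``locally compact $T_3$'' supplies both the $T_1$ condition needed for Theorem~\ref{periodic} and the locally compact regular condition needed for Theorem~\ref{main} is a welcome clarification, but otherwise there is nothing to add.
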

\medskip
\begin{proof}
This is a consequence of Theorems \ref{periodic} and \ref{main}.
\end{proof}

\section{Other externologies and examples}\label{eledeigualele}

If we take a right-invariant subset $S$ of a discrete semi-flow
$X$, new families of externologies can be analyzed: for instance,
we can consider the externology $\epsilon(X,S)$ formed by all the
open neighborhoods of $S$ in $X$.

As an application of the notions and constructions developed in
this paper, we analyze the decompositions given by externologies
induced by  the right-invariant subsets of 1-periodic (fixed)
points $P_1$ and 2-periodic points $P_2$.

In the following example, we take on the Riemann sphere $X=\C\cup \{\infty\}$
the discrete semi-flow induced by the polynomial function  $h(z)= z^2-1$ ($h(\infty)=\infty$).

The fixed points of $h$ are $\infty$ and the two golden numbers
$p_{1}=\frac{1-\sqrt{5}}{2}$ and $p_{2}=\frac{1+\sqrt{5}}{2}$. In
this case, we can consider the exterior discrete semi-flow
$X_1=(\C\cup \{\infty\}, h, \epsilon(X, P_{1}))$ and the induced
map
$$\omega_1 \colon D(X_1) \to {^\omega \pi}_0^{\BG}(X_1)$$
from the region of attraction $D(X_1)$ to
the set of omega-representable end points of Brown-Grossman type.
In this case, one has  a canonical isomorphism $P_1\cong {^\omega \pi}_0^{\BG}(X_1)$.

The externology $\epsilon(X, P_{1})$ induces a decomposition of
the following form:
$$X=(X\setminus D(X_{1})) \sqcup \omega_{1}^{-1}(\infty)\sqcup \omega_{1}^{-1}(p_{1})\sqcup\omega_{1}^{-1}(p_{2}),$$
which can be seen in the  Figure \ref{1periodic}.

The basin of $\infty$ is displayed using brown color, the golden
numbers are repulsing points and its basins are not visible in the
figure on the left.  Nevertheless, when we apply a zoom effect, it
is possible to see some points in the basins of the end points
associated to the golden numbers: the central point in the figure
on the right is a point of the basin of a golden number $p_{1}$.
The black color corresponds to points which are not in basins of
fixed points; that is, to $X\setminus D(X_{1})$.

\begin{figure}[h!]
\begin{center}
\includegraphics[scale=0.2]{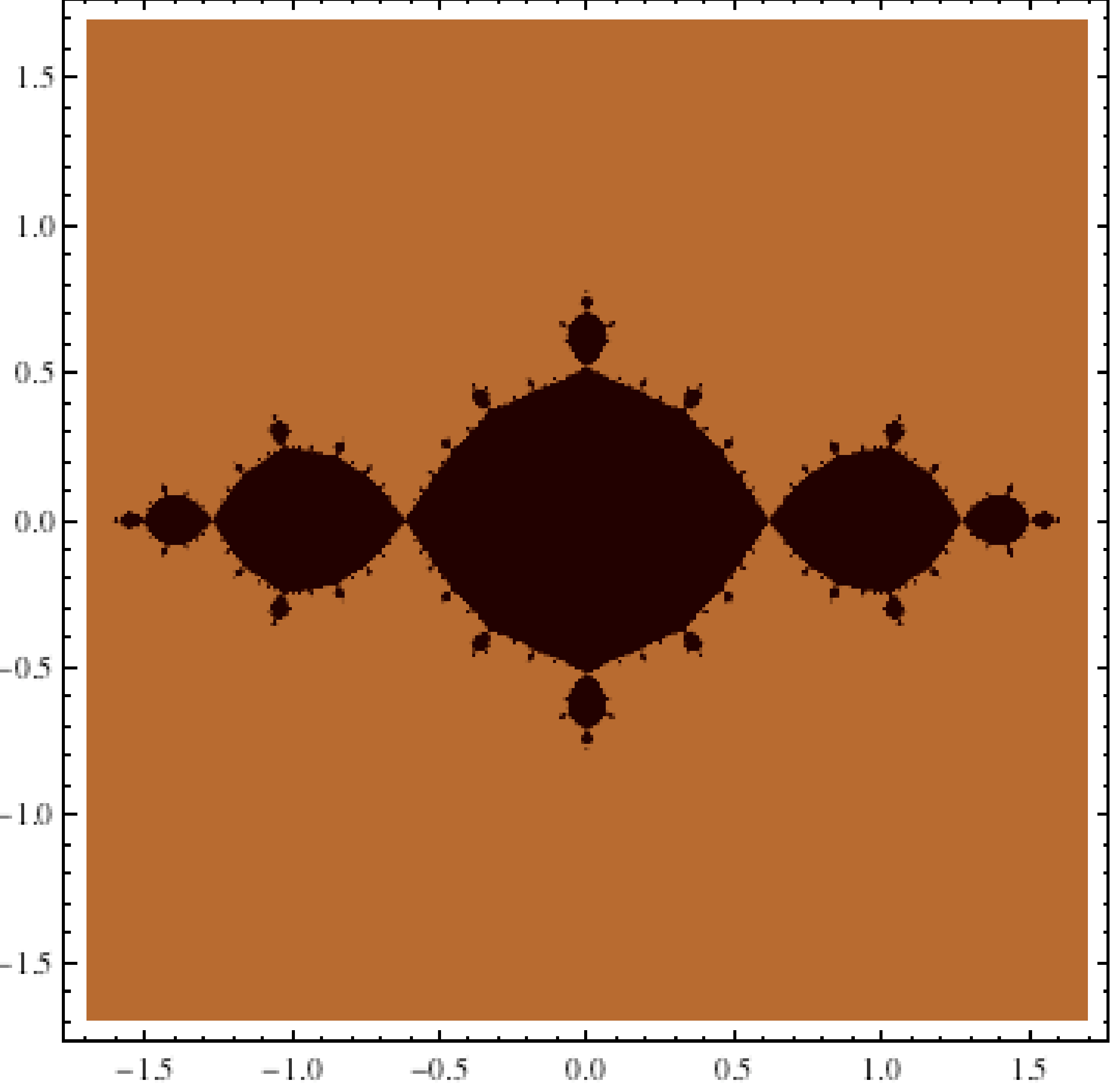} \includegraphics[scale=0.2]{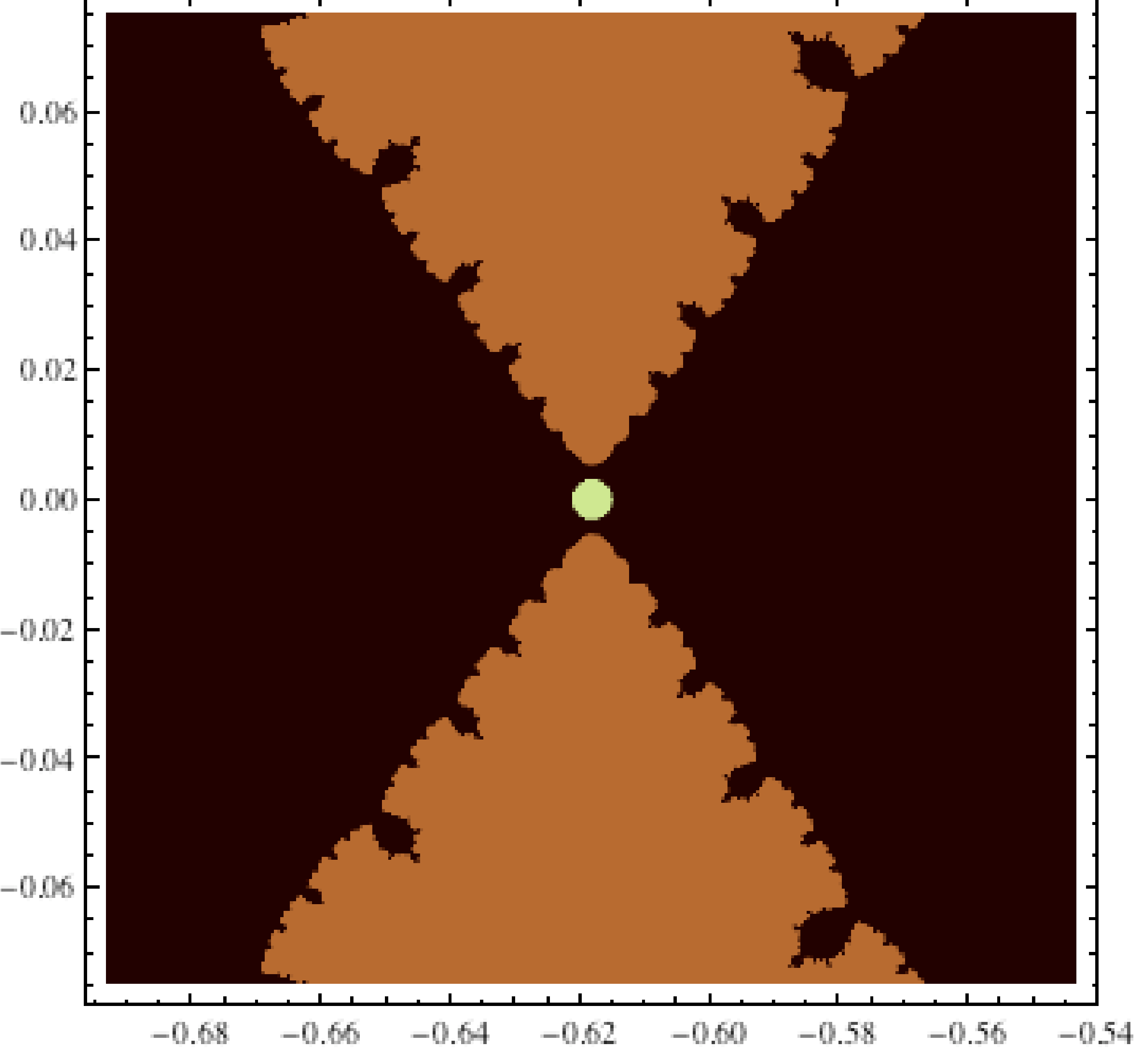}\\
\end{center}
\caption{On the left, the basin of the attracting infinity point (brown) and, on the right, a point of the basin of the repelling fixed point $p_1$.}
\label{1periodic}
\end{figure}

 When the 2-periodic points of $h$ are studied, one obtains:
$ P_2=\{\infty,-1,  p_{1}, 0, p_{2}\}$.
 For the exterior discrete semi-flow $X_2=(\C\cup \{\infty\}, h, \epsilon(X, P_{2}))$ and  the induced map
$$\omega_2 \colon D(X_{2}) \to {^\omega \pi}_0^{\BG}(X_2)$$
from the region of attraction $D(X_2)$ to
the set of omega-representable end points of Brown-Grossman type, one has a canonical isomorphism $P_2\cong {^\omega \pi}_0^{\BG}(X_2)$.
For $X_2$, it is interesting to check that
$P_1\cong {{^\omega\check \pi}}_0(X_2) \cong  {^\omega \pi}_0^{S}(X_2)$.
The cyclic point $-1$
generates the Brown-Grossman end point represented by the sequence
$(-1, 0, -1, 0, \dots)$  and the cyclic point 0 induces the end
point represented by $(0, -1, 0, -1 \dots)$.

To compare the decompositions induced by $P_{1}$ and $P_{2}$, one has to analyse the
diagram

$$ \xymatrix{ D(X_{1}) \ar[r]^{\omega_{1}} \ar[d]&   {^\omega \pi}_0^{\BG}(X_1) \ar[d] \\
   D(X_2) \ar[r]^{\omega_{2}}&   {^\omega \pi}_0^{\BG}(X_2) }$$
where  $D(X_1) \to   D(X_2)$ is the inclusion:
 $$ \omega_{1}^{-1}(\infty)\sqcup \omega_{1}^{-1}(p_{1})\sqcup\omega_{1}^{-1}(p_{2}) \subset
 \omega_{2}^{-1}(\infty)\sqcup \omega_{2}^{-1}(-1)\sqcup\omega_{2}^{-1}(p_{1}) \sqcup\omega_{2}^{-1}(0) \sqcup\omega_{2}^{-1}(p_{2}) $$
It is interesting to note that, in Figure \ref{2periodic}, the
black part associated to $\omega_{1}$ has been divided into the
union of the basins of the new 2-cyclic attracting points of $h$.
The immediate basins $D_{-1}^S$ and $D_{0}^S$ form the path
component of the basin containing the 2-cyclic points $-1$ and
$0$, respectively.
\begin{figure}[h!]
 \begin{center}
\includegraphics[scale=0.2]{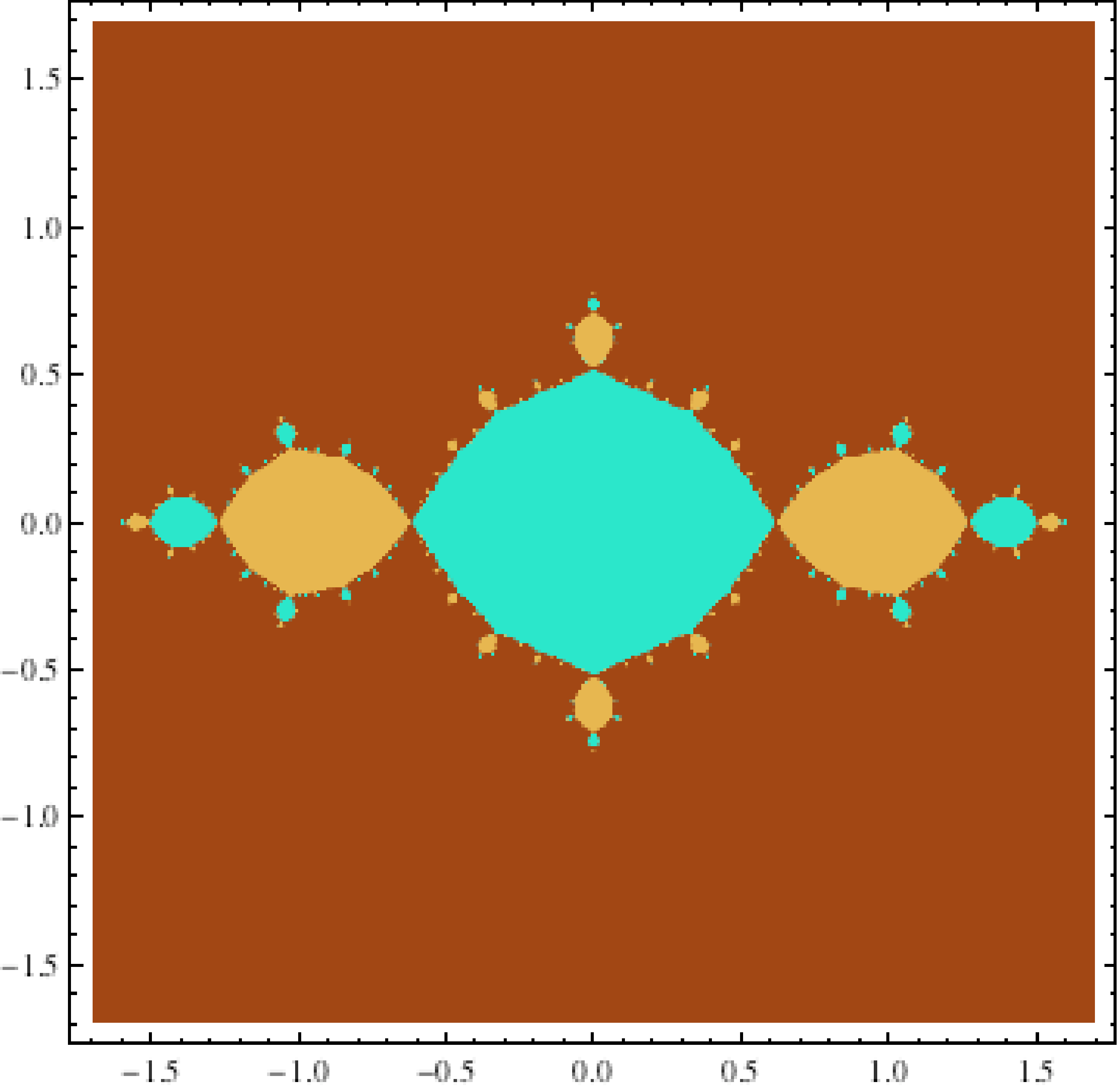}
\end{center}
\caption{The basins of the new 2-cyclic points  correspond to the black part of Figure \ref{2periodic}.}
\label{2periodic}
\end{figure}

\begin{figure}[h!]
\begin{center}
\includegraphics[scale=0.5]{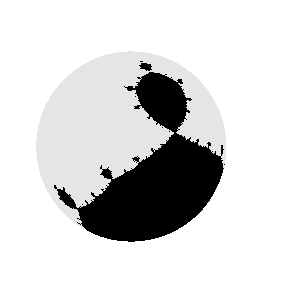}
\includegraphics[scale=0.45]{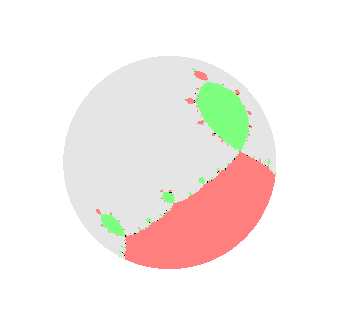}
\end{center}
\caption{The basins of $D(X_1)$ and $D(X_2)$  with new colors on the Riemann sphere.}
\label{sphere}
\end{figure}

\section{Future applications}

The authors think that it will be interesting to develop a deeper
study of externologies associated to a right-invariant subspace
$S$ of a discrete semi-flow $X$ given by all the open
neighborhoods of $S$ in $X$.
The results obtained for the example
given in section above suggest that some of the particular
properties analyzed can be proven for more general discrete
semi-flows. For  externologies given by open neighborhoods of an
invariant subset $S$, one can prove that, under good conditions,
the region of attraction of the externology is the region of
attraction of $S$. In some cases, the  limit  of the externology
agrees with $S$ and the set of end points of Borsuk-\v Cech type
is related with the connected components of $S$. Moreover, the
omega-representable  end points of Borsuk-\v Cech type are related
to the fixed points of $S$, and the basins of end points, under
suitable conditions, correspond to basins of fixed points. The
basin of a representable  end point of Steenrod type is related to
the immediate component of the basin of a fixed point. The basins
of omega-representable end points of Brown-Grossman type are
related to the basins of periodic points which are contained in
$S$.

It is worth pointing out that the open neighborhoods of a subset
$S$ on a manifold can also be taken as a resolution (in the sense
of shape theory) of $S$ and this fact gives the  possibility of
applying  techniques of shape theory, like shape Conley index (see
\cite{Salamon}), algebraic characterization of shape equivalences,
et cetera. We can also use these externologies/resolutions to
compare techniques and results coming from   exterior homotopy
theory and from shape theory. For instance, the  exterior
spaces can also be used for the study of the exterior homotopy
type  of  attractors and repellers of discrete and continuous
flows  and to compare them with their corresponding shapes, see
\cite{MR, MSS, Gabites, Sanjurjo11}.

The study of semi-flows  of the form $(M, h)$, where $M$ is a
manifold and $h\colon M \to M$ is a branched covering, can be also
analyzed with the techniques developed in this paper, by either
taking externologies given by open neighborhoods of suitable
subsets of periodic points or taking the right-absorbing
externology. Many questions arise, such as the study of
Brown-Grossman end points associated to points in  the singular
subset of the branched covering. A particular case of this study
is the dynamics of  a rational map on the Riemann sphere and the
study of the structure of the corresponding Julia set.  The
advantage of our method is that the techniques based on topologies
and externologies can be applied to any continuous map without
having differentiability or analyticity conditions on the
semi-flow map.

The  discretization processes (the first return Poincar\'e map,
the discretization of a continuous semi-flow, \ldots) and
anti-discretization constructions (suspension, prolongation and
telescopes, see \cite{FHR14}) can also be analyzed by means of
externologies associated to continuous and discrete semi-flows.
This will permit us to connect the theory of basins of end points
developed in \cite{GHR12} for continuous flows with the basins of
end points of discrete semi-flows.

Finally, it is important to say that the problem of sensibility
with respect to the initial conditions can be studied by taking
the externology of open subsets that capture the ``future part" of
``tubes" generated by trajectories of  neighborhoods at each
point.


\begin{thebibliography}{10000}


\bibitem{Birkhoff} \textsc{G. D. Birkhoff},
\emph{Dynamical Systems}, AMS, Colloquium Publications, vol. 9,
1927.






\bibitem {DHR09} \textsc{A. {Del R{\'{\i}}o}, L. J. {Hern\'{a}ndez} and M. T.
{Rivas Rodr{\'{\i}}guez}}, \emph{S-Types of global towers of spaces an
exterior spaces}, Appl. Cat. Struct., \textbf{17}, no. 3, 287--301
(2009).

\bibitem{E-H}
 \textsc{D. Edwards and H. Hastings}, \emph{ \v Cech and Steenrod homotopy theories with applications to geometric topology},
 Lecture Notes in Math., \textbf{542}, Springer-Verlag, Berlin, Heidelberg, New York, 1976.



\bibitem{EHR89}  \textsc{J. I. {Extremiana}, L. J. {Hern{\'a}ndez} and M. T.
{Rivas}}, \emph{An isomorphism theorem of the Hurewicz Type in the proper homotopy category},
Fund. Math., \textbf{132}, 195--214 (1989).

\bibitem {EHR05} \textsc{J. I. {Extremiana}, L. J. {Hern\'{a}ndez} and M. T.
{Rivas}}, \emph{Postnikov factorizations at infinity}, Top. and
its Appl., \textbf{153},  370--393 (2005).



\bibitem {FHR14}  \textsc{J. M. Fern\'andez-Cestau, L. J. Hern\'andez-Paricio, M. T. Rivas-Rodr\'{\i}guez},
\emph{Prolongations, Suspensions and Telescopes}, preprint, 2014.


\bibitem {Freudenthal} \textsc{H. Freudenthal},
\emph{\"Uber die Enden topologisher R\"aume und Gruppen},
Math. Zeith., \textbf{53},  692--713 (1931).

\bibitem{G98} \textsc{J. M. {Garc{\'{\i}}a Calcines}}, \emph{Homotop{\'{\i}}a propia simplicial}, Tesis, La Laguna, 1998.


\bibitem {GGH98} \textsc{J. M. {Garc{\'{\i}}a Calcines}, M.~{Garc{\'{\i}}a Pinillos}
and L. J. {Hern\'{a}ndez}}, \emph{A closed simplicial model category
for proper homotopy and shape theories}, Bull. Aus. Math. Soc.,
\textbf{57}, no.~2, 221--242 (1998).


\bibitem {GGH04} \textsc{J. M. {Garc{\'{\i}}a Calcines}, M. {Garc{\'{\i}}a Pinillos} and  L. J. {Hern\'{a}ndez}},
\emph{Closed simplicial model structures for exterior
and proper homotopy}, Appl. Cat. Struct., \textbf{12}, no.~3,
225--243 (2004).

\bibitem {GHR12} \textsc{J. M. {Garc{\'{\i}}a Calcines}, L. J. {Hern{\'a}ndez} and M. T.
{Rivas Rodr{\'{\i}}guez}}, \emph{Limit and end functors of dynamical systems via
exterior spaces}, Bull. Belg. Math. Soc. Simon Stevin, \textbf{20}, 937--959 (2013).

\bibitem {GHR14} \textsc{J. M. {Garc{\'{\i}}a Calcines}, L. J. {Hern{\'a}ndez} and M. T.
{Rivas Rodr{\'{\i}}guez}}, \emph{A completion construction for  continuous dynamical  systems}, Topological Methods in Nonlinear Analysis (2014).

\bibitem {GHR09} \textsc{M. {Garc{\'{\i}}a Pinillos}, L. J. {Hern\'{a}ndez Paricio} and M. T.
{Rivas Rodr{\'{\i}}guez}}, \emph{Exact sequences and closed model
categories}, Appl. Cat. Struct., \textbf{18}, no.~4, 343--375
(2010).








\bibitem{MR}  \textsc{M. A. Mor\'on, F. R. Ruiz del Portal},
\emph{A note about the shape of attractors of discrete semi-dynamical systems},
Proc. Amer. Math. Soc., \textbf{134}, 2165--2167 (2006).


\bibitem{MSS}  \textsc{M. A. Mor\'on, J. J. S\'anchez Gabites, J. M. R. Sanjurjo},
\emph{Topology and dynamics of unstable attractors},
Fund. Math., \textbf{197}, 239--252 (2007).

\bibitem{Poincare2} \textsc{H. Poincar\'e},
\emph{Les m\'ethodes nouvelles de la m\'ecanique c\'eleste}, Paris, Gauthier-Villars et fils, 1892-99 (1892).

\bibitem{Poincare} \textsc{H. Poincar\'e},
\emph{M\'emoire sur les courbes d\'efinies par une \'equation diff\'erentielle},
Handbook of Algebraic Topology, Chapter 3, 127--167 (1995).

\bibitem{Salamon}\textsc{J. W. Robbin and D. Salamon},
\emph{Dynamical systems, shape theory and the Conley index}, Ergod. Th. and Dynam.
Sys., \textbf{8} (Charles Conley Memorial Volume), 375--393 (1988).


\bibitem{Gabites} \textsc{J. J. S\'{a}nchez Gabites},
\emph{Dynamical systems and shapes}, Revista de la Real Academia
de Ciencias Exactas, F\'{\i}sicas y Naturales. Serie A: Matem\'{a}ticas
(RACSAM), \textbf{102}, no. 1,  127--160 (2008).


\bibitem{Sanjurjo11} \textsc{J. M. {Sanjurjo}}, \emph{Stability, attraction and shape;
a topological study of flows},  Lecture Notes in Nonlinear Analysis, vol. 12, 93--122 (2011).


\end{thebibliography}
\end{document}